\newcommand{\arxiv}[1]{\href{http://arxiv.org/abs/#1}{\texttt{arXiv:#1}}}
\def\acc#1{\left\{ #1 \right\}}
\renewcommand{\le}{\leqslant}
\renewcommand{\ge}{\geqslant}
\theoremstyle{plain}
\newtheorem{theorem}{Theorem}
\newtheorem{lemma}[theorem]{Lemma}
\newtheorem{corollary}[theorem]{Corollary}
\theoremstyle{definition}
\theoremstyle{remark}
\title{More characterizations of morphic words}
\author{
Golnaz Badkobeh\footnote{Department of Computer Science, City University of London, United Kingdom.\\
e-mail \href{mailto:Golnaz.Badkobeh@city.ac.uk}{\texttt Golnaz.Badkobeh@city.ac.uk}.}
\and
Pascal Ochem\footnote{LIRMM, CNRS, Universit\'e de Montpellier, France.\\
e-mail \href{mailto:ochem@lirmm.fr}{\texttt ochem@lirmm.fr}.}}
\date{2023}
\begin{document}

\maketitle

\begin{abstract}
An interesting phenomenon in combinatorics on words is when every recurrent word satisfying
some avoidance constraints has the same factor set as a morphic word.
An early example is the Hall-Thue word, fixed point of the morphism
$\texttt{0}\to\texttt{012}$, $\texttt{1}\to\texttt{02}$, $\texttt{2}\to\texttt{1}$,
which is essentially the only ternary word avoiding squares and the words \texttt{010} and \texttt{212}.
We provide some examples of this phenomenon from various contexts.
\end{abstract}

\section{Introduction}\label{sec:intro}
Many recent papers in combinatorics on words consider some factorial languages of a given type (described below) and
determine which ones are infinite.
\begin{itemize}
\item Binary words with few antisquares and low critical exponent~\cite{BCMORS2023}.
\item Binary words with few pairs of complementary factors and low critical exponent~\cite{CDOORS2023}.
\item Binary words with few distinct palindromes and low critical exponent~\cite{DOO2023}.
\item Binary words avoiding formulas with two variables~\cite{OchemRosenfeld2016}.
\end{itemize}
In each paper, most languages contain exponentially many words (with respect to their length)
and a few languages contain polynomially many words. The polynomial cases are both more interesting
and more difficult to handle. In this paper, instead of considering exhaustively the
exponential and polynomial cases of one avoidance framework, we focus on the polynomial cases
that we encountered in various context. This widens the scope of our previous paper~\cite{BO15}
which considered polynomial cases of words with few squares.

\subsection{Notation}
Let $\Sigma_k$ denote the $k$-letter alphabet $\acc{\texttt{0,1},\dots,\texttt{k-1}}$.
Let $\varepsilon$ denote the empty word.
A finite word is \emph{recurrent} in an infinite word $w$ if it appears as a factor of $w$ infinitely many times.
An infinite word $w$ is \emph{recurrent} if all its finite factors are recurrent in $w$.
If a morphism $f$ is such that $f(\texttt{0})$ starts with $\texttt{0}$, then the \emph{fixed point} of $f$
is the unique word $w=f^\omega(\mathtt{0})$ starting with $\texttt{0}$ and satisfying $w=f(w)$. 
An infinite word is \emph{pure morphic} if it is the fixed point of a morphism.
An infinite word is \emph{morphic} if it is the image $g(f^\omega(\texttt{0}))$ by a morphism $g$ of a pure morphic word $f^\omega(\texttt{0})$.
A pattern $P$ is a finite word of variables over the alphabet $\acc{A,B,\dots}$.
We say that a variable is \emph{isolated} if it appears exactly once in the pattern.
Following Cassaigne~\cite{Cassaigne1994}, we associate to a pattern $P$ the \emph{formula} $f$
obtained by replacing every isolated variable in $p$ by a dot.
For example, the formula associated to the pattern $ABBACABADAA$ is $ABBA.ABA.AA$.
The factors between the dots are called \emph{fragments}.
An \emph{occurrence} of a formula $f$ in a word $w$ is a non-erasing morphism $h:\Delta^*\to\Sigma^*$
such that the $h$-image of every fragment of $f$ is a factor of $w$.
A word $w$ (finite or infinite) \emph{avoids} a formula $f$ if it contains no occurrence of $f$.

A \emph{repetition} is a factor of the form $r=u^nv$, where $u$ is non-empty and $v$ is a prefix of $u$.
Then $|u|$ is the \emph{period} of the repetition $r$ and its \emph{exponent} is $|r|/|u|$. 
A \emph{square} is a repetition of exponent 2. Equivalently, it is an occurrence of the pattern $AA$. 
An \emph{overlap} is a repetition with exponent strictly greater than 2.
We use the notation $SQ_t$ for the pattern corresponding to squares with period at least $t$,
that is, $SQ_1=AA$, $SQ_2=ABAB$, $SQ_3=ABCABC$, and so on.
A morphism $m$ is given in the format $m(\texttt{0})/m(\texttt{1})/...$

\subsection{Avoiding words}
A popular theme in combinatorics on words is to construct or at least prove the existence of
an infinite word over $\Sigma_k$ that satisfies some given avoidance constraints, such as
limiting or forbidding repetitions and/or patterns.
A step further is to describe the avoiding words over $\Sigma_k$. The following situations occur in the literature.
\begin{enumerate}
 \item There are exponentially many avoiding words (with respect to their length).
 Classical examples are square-free words over $\Sigma_3$
 and words over $\Sigma_2$ that contain only 3 distinct squares~\cite{Shur:2012,Ochem2004}.
 \item Every recurrent avoiding word has the same factor set as one morphic word.
 For example, Thue~\cite{Thue06,Thue:1912} proved that every bi-infinite word
 avoiding squares and the words \texttt{010} and \texttt{212} has the same factor set as 
 the fixed point of the morphism $\texttt{012}/\texttt{02}/\texttt{1}$.
 \item There is a finite set $S$ of morphic words such that every recurrent avoiding word has the same factor set as one word in $S$.
For example, every bi-infinite binary word avoiding the formula $AA.ABA.ABBA$
 has the same factor set as one of 4 specific binary morphic words~\cite{OchemRosenfeld2016}.
 \item There are polynomially many avoiding words, but there are infinitely many recurrent avoiding words
 such that no two of them have the same factor set. The only known examples of this case are the binary words
 avoiding $ABACA.ABCA$ and the binary words avoiding $ABAC.BACA.ABCA$~\cite{OchemRosenfeld2017}.
\end{enumerate}
We do not know whether any other situation is possible.

In this paper, we identify quite natural examples of the second situation, with a unique morphic word.
Let $w$ be a morphic word over $\Sigma_k$ and $\mathcal{C}$ be a set of avoidance constraints.
We say that $\mathcal{C}$ \emph{characterizes} $w$
if every recurrent word over $\Sigma_k$ avoiding $\mathcal{C}$ has the same factor set as $w$.
Every morphic word can be written as $g(f^\omega(\texttt{0}))$ in many different ways.
In particular, $g$ can be chosen to be a coding (i.e., $1$-uniform).
For our morphic words that admit a characterization, the defining morphisms are found
such that the fixed point $f^\omega(\texttt{0})$ is a famous pure morphic word
which, unsurprisingly, also admits a characterization.
Let us present these useful pure morphic words.

\begin{itemize}
\item ${\bf f}$ is the Fibonacci word, fixed point of $\texttt{01}/\texttt{0}$.\\
It is characterized by $\acc{AAAA,\texttt{11},\texttt{000},\texttt{10101}}$~\cite{BCMORS2023}.\\
In~\cite{BCMORS2023}, it is shown that the image of ${\bf f}$ by $\texttt{01}/\texttt{11}$
is the binary word containing no antisquare other than \texttt{01} and \texttt{10} with the least
critical exponent.
\item $b_3$ is the Hall-Thue word, the fixed point of $\texttt{012}/\texttt{02}/\texttt{1}$.\\
It is characterized by $\acc{AA,\texttt{010},\texttt{212}}$~\cite{Thue06,Thue:1912}.\\
Some images of $b_3$ are characterized as containing only a few squares and overlaps~\cite{BO15},
or as avoiding formulas with two variables~\cite{OchemRosenfeld2016}.
Other interesting images of $b_3$ are described in~\Cref{sec:b3}.
\item ${\bf p}$ is the fixed point of $\texttt{01}/\texttt{21}/\texttt{0}$.\\
It is characterized by $\{AAA,\texttt{00},\texttt{11},\texttt{22},\texttt{20},\texttt{212},\texttt{0101},\texttt{02102},\texttt{121012},$\\
$\texttt{01021010},\texttt{21021012102}\}$~\cite{CORS2022}.\\
Some images of ${\bf p}$ are characterized as minmizing the critical exponent of binary words containing few pairs
of complementary factors~\cite{CDOORS2023} or few palindromes~\cite{DOO2023}.
\item $b_5$ is the fixed point of $\texttt{01}/\texttt{23}/\texttt{4}/\texttt{21}/\texttt{0}$.\\
It is characterized by $\{AA,\texttt{02},\texttt{03},\texttt{13},\texttt{14},\texttt{20},\texttt{24},\texttt{31},\texttt{32},\texttt{40},\texttt{41},\texttt{43},\texttt{121},$\\
$\texttt{212},\texttt{304},\texttt{3423},\texttt{4234}\}$~\cite{BO15}.\\
The other two square-free words of Thue, namely those avoiding $\acc{\texttt{010},\texttt{020}}$ and $\acc{\texttt{121},\texttt{212}}$,
are images of $b_5$~\cite{BO15}. Other interesting images of $b_5$ are described in~\Cref{sec:b5}.
\item ${\bf pd}$ is the period-doubling word, the fixed point of $\texttt{01}/\texttt{00}$.
James Currie~\cite{Currie2023} has established that ${\bf pd}$ is characterized by $\acc{AAAA, AAABABAA,\texttt{11},\texttt{1001}}$.
We show in~\Cref{sec:pd} that ${\bf pd}$ is also characterized by $\acc{AA.ABAB.BB,\texttt{11}}$.\\
\end{itemize}

Throughout the paper, we informally say that we have checked that some morphic word $w$ avoids a pattern $P$.
This means that we have successfully run the program written by Matthieu Rosenfeld that implements the algorithm of
Julien Cassaigne~\cite{cassaignealgo} to test whether a morphic word avoids a pattern, with $w$ and $P$ as input.

\section{Images of $b_3$}\label{sec:b3}
We start with a useful lemma using ideas from~\cite{BO15}.
\begin{lemma}\label{lem:b3}
Let $w$ be a bi-infinite word and let $m:\Sigma^*_3\rightarrow\Sigma^*_k$ be such that 
\begin{itemize}
 \item $w$ is in $\acc{m(\textnormal{\texttt{0}}),m(\textnormal{\texttt{1}}),m(\textnormal{\texttt{2}})}^\omega$,
 \item $m(\textnormal{\texttt{0}})=axb$ and $m(\textnormal{\texttt{1}})=ab$,
 \item $w$ avoids $SQ_t$ with $t\le\min\acc{|m(\textnormal{\texttt{0}})|,|m(\textnormal{\texttt{1}})|,|m(\textnormal{\texttt{2}})|}$.
\end{itemize}
then $w$ has the same factor set as $m(b_3)$.
\end{lemma}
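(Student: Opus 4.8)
The plan is to realize $w$ as the $m$-image of a bi-infinite ternary word and then push it through Thue's characterization of $b_3$. Since $w\in\acc{m(\texttt{0}),m(\texttt{1}),m(\texttt{2})}^\omega$ and each block is non-empty (its length is at least $t\ge 1$), I would fix a factorization $w=m(u)$ with $u$ a bi-infinite word over $\Sigma_3$. The goal then becomes to show that $u$ avoids $\acc{AA,\texttt{010},\texttt{212}}$, since the result of Thue quoted in \Cref{sec:intro} says exactly that a bi-infinite ternary word with these constraints has the same factor set as $b_3$.

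First I would rule out squares and $\texttt{010}$ in $u$ directly from the block lengths. If $u$ contained a square $zz$, then $m(z)m(z)$ would be a square in $w$ of period $|m(z)|\ge\min\acc{|m(\texttt{0})|,|m(\texttt{1})|,|m(\texttt{2})|}\ge t$, i.e.\ an occurrence of $SQ_t$; so $u$ is square-free. For $\texttt{010}$, I would expand $m(\texttt{010})=axb\cdot ab\cdot axb$ and read off the central factor $baba=(ba)^2$, a square of period $|ab|=|m(\texttt{1})|\ge t$; hence an occurrence of $\texttt{010}$ in $u$ again produces an $SQ_t$ in $w$, and $u$ avoids $\texttt{010}$.

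The hard part is $\texttt{212}$, because the hypothesis only constrains $m(\texttt{0})$ and $m(\texttt{1})$, so the insertion trick does not apply to the block $m(\texttt{2})$ and $m(\texttt{212})=m(\texttt{2})\cdot ab\cdot m(\texttt{2})$ need not contain a square on its own. Here I would first use that $u$ is already known to be square-free: since $\texttt{1212}=(\texttt{12})^2$ and $\texttt{2121}=(\texttt{21})^2$ are squares, any occurrence of $\texttt{212}$ in $u$ must be flanked by $\texttt{0}$ on both sides, so $u$ contains $\texttt{02120}$. Applying $m$ gives $m(\texttt{02120})=axb\cdot m(\texttt{2})\cdot ab\cdot m(\texttt{2})\cdot axb$, and the square to expose is $b\,m(\texttt{2})\,a\,b\,m(\texttt{2})\,a=(b\,m(\texttt{2})\,a)^2$, of period $|m(\texttt{2})|+|m(\texttt{1})|\ge t$. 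This is an $SQ_t$ in $w$, a contradiction, so $u$ avoids $\texttt{212}$ as well. Locating this particular square after the square-free flanking step is, I expect, the main obstacle.

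Finally I would invoke Thue's characterization to conclude that $u$ has the same factor set as $b_3$, and then transport this through $m$: every factor of $w=m(u)$ sits inside $m(z)$ for some factor $z$ of $u$ (extend the factor to whole blocks, adding at most one block on each side), hence inside $m(b_3)$, and symmetrically every factor of $m(b_3)$ occurs in $w$. Therefore $w$ and $m(b_3)$ have the same factor set, as claimed.
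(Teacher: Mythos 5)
Your proof is correct and follows the same skeleton as the paper's: factor $w=m(u)$ with $u$ a bi-infinite ternary word, expose squares of period at least $t$ in $w$ arising from forbidden configurations in $u$, and conclude via Thue's characterization of $b_3$ by $\acc{AA,\texttt{010},\texttt{212}}$. The one place where you genuinely diverge is the treatment of $\texttt{212}$. The paper proves that $u$ avoids \emph{every} factor of the form $\texttt{0}v\texttt{1}v\texttt{0}$ with $v\in\Sigma_3^*$ (the same computation you perform, with $m(v)$ inserted, yielding the square $(b\,m(v)\,a)^2$ of period $|m(\texttt{1})|+|m(v)|\ge t$) and then invokes Lemma~3.2 of~\cite{aabbc}, which states that for bi-infinite square-free ternary words, avoiding this whole family is equivalent to avoiding $\acc{\texttt{010},\texttt{212}}$. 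You instead go straight at $\texttt{212}$: square-freeness of the bi-infinite word $u$ forces any occurrence of $\texttt{212}$ to extend to $\texttt{02120}$ (the neighbours cannot be $\texttt{2}$ or $\texttt{1}$ without creating $\texttt{22}$, $(\texttt{12})^2$, or $(\texttt{21})^2$), which is exactly the instance $v=\texttt{2}$ of the family, and the square $(b\,m(\texttt{2})\,a)^2$ of period $|m(\texttt{1})|+|m(\texttt{2})|\ge t$ finishes it. Your version is self-contained, avoiding the appeal to the external lemma at the cost of the small flanking argument; the paper's version reuses a known equivalence and records the stronger intermediate fact that $u$ avoids all of $\texttt{0}v\texttt{1}v\texttt{0}$. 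Both are valid, and the remaining steps you give (non-emptiness of the blocks, the period bounds, the transport of the factor set through $m$) all check out.
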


\begin{proof}
Since $w\in\acc{m(\texttt{0}),m(\texttt{1}),m(\texttt{2})}^\omega$, we write $w=m(r_3)$,
where $r_3$ is a bi-infinite ternary word.
Notice that $r_3$ is square-free, since otherwise $m(r_3)$ would contain $SQ_t$.
Moreover, $r_3$ avoids words of the form $\texttt{0}u\texttt{1}u\texttt{0}$, with $u\in\Sigma_3^*$, since
$m(\texttt{0}u\texttt{1}u\texttt{0})=axbm(u)abm(u)axb$ contains $(bm(u)a)^2$, which is an occurrence of $SQ_t$.
Now, by Lemma 3.2 in~\cite{aabbc}, a bi-infinite ternary square-free word avoids factors of the form
$\texttt{0}u\texttt{1}u\texttt{0}$ with $u\in\Sigma_3^*$ if and only if it avoids $\acc{\texttt{010},\texttt{212}}$.
So $r_3$ has the same factor set as $b_3$.
By taking the $m$-image, $w$ has the same factor set as $m(b_3)$.
\end{proof}

\subsection{Four squares}\label{ssec:4sq}
Let $g_4$ be the morphism from $\Sigma_3^*$ to $\Sigma_2^*$ defined by
$$\begin{array}{l}
 g_4(\texttt{0})=\texttt{00010011000111011},\\
 g_4(\texttt{1})=\texttt{000100111011},\\
 g_4(\texttt{2})=\texttt{00111}.\\
\end{array}$$

\begin{theorem}
Every binary bi-infinite word containing only the squares \textnormal{\texttt{00}}, \textnormal{\texttt{11}},
\textnormal{\texttt{001001}}, and \textnormal{\texttt{110110}} has the same factor set as $g_4(b_3)$.
\end{theorem}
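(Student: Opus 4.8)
The plan is to reduce the theorem to an application of \Cref{lem:b3}. The key observation is that the four allowed squares \texttt{00}, \texttt{11}, \texttt{001001}, \texttt{110110} correspond exactly to the pattern $SQ_t$ for an appropriate threshold $t$: squares of period $1$ (\texttt{00}, \texttt{11}) and period $3$ (\texttt{001001}, \texttt{110110}) are permitted, but every square of period $2$ and every square of period at least $4$ must be forbidden. So the hypothesis ``contains only the squares \texttt{00}, \texttt{11}, \texttt{001001}, \texttt{110110}'' is equivalent to saying the word avoids $SQ_4$ \emph{together with} the extra constraint that no square of period $2$ or $3$ other than the two listed period-$3$ squares occurs.

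First I would verify, by direct inspection of the three blocks, that $g_4$ has the shape required by \Cref{lem:b3}: I want $g_4(\texttt{0})=axb$ and $g_4(\texttt{1})=ab$ for a common prefix $a$ and common suffix $b$. Reading off the words, $g_4(\texttt{0})=\texttt{00010011}\,\texttt{000111}\,\texttt{011}$ and $g_4(\texttt{1})=\texttt{00010011}\,\texttt{011}$ share the prefix $a=\texttt{00010011}$ and suffix $b=\texttt{011}$, with $x=\texttt{000111}$ the inserted middle. I would also record $\min\acc{|g_4(\texttt{0})|,|g_4(\texttt{1})|,|g_4(\texttt{2})|}=|g_4(\texttt{2})|=5$, so that $t=4\le 5$ makes the threshold hypothesis of the lemma hold for $SQ_4$.

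The substantive step is to show that any bi-infinite binary word $w$ whose only squares are the four listed ones must in fact lie in $\acc{g_4(\texttt{0}),g_4(\texttt{1}),g_4(\texttt{2})}^\omega$; this is the uniqueness-of-parse part and I expect it to be the main obstacle. The strategy is the standard synchronization argument: one enumerates, by a finite backtracking search, all binary words avoiding the forbidden squares (equivalently avoiding $SQ_4$ and all period-$2$ and disallowed period-$3$ squares), shows this language is finite up to the recurrent factors, and checks that every sufficiently long such word factors uniquely over the three blocks $g_4(\texttt{0}),g_4(\texttt{1}),g_4(\texttt{2})$. Concretely I would exhibit the set of factors of length a few times $\max_i|g_4(i)|$, verify each admits a unique decomposition boundary, and thereby recover a ternary preimage $r_3$ with $w=g_4(r_3)$. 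This is exactly the kind of finite check the paper performs elsewhere, and it is where all the real work sits.

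Once $w=g_4(r_3)$ is established, \Cref{lem:b3} applies directly: the three hypotheses are the block containment, the $axb$/$ab$ structure, and avoidance of $SQ_t$ with $t\le\min_i|g_4(i)|$, all now verified with $t=4$. The lemma then yields that $w$ has the same factor set as $g_4(b_3)$, which is the claim. I would close by remarking (and checking computationally, in the sense described in the introduction) that $g_4(b_3)$ does itself contain exactly the four squares \texttt{00}, \texttt{11}, \texttt{001001}, \texttt{110110} and no others, confirming that the characterized word genuinely satisfies the stated avoidance constraints and that the characterization is not vacuous.
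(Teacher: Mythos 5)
Your proposal follows essentially the same route as the paper: reduce the four-squares condition to a finite list of forbidden factors plus $SQ_4$, establish by a finite computation that every bi-infinite good word parses over $\acc{g_4(\texttt{0}),g_4(\texttt{1}),g_4(\texttt{2})}$, and conclude via \Cref{lem:b3}, which is exactly what the paper does with its set $S_4^{100}$. One small slip: your explicit split $a=\texttt{00010011}$, $b=\texttt{011}$ gives $ab=\texttt{00010011011}$, which is not $g_4(\texttt{1})=\texttt{000100111011}$; a correct choice is, e.g., $a=\texttt{0001001}$, $x=\texttt{10001}$, $b=\texttt{11011}$, so the lemma still applies.
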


\begin{proof}
We say that a binary word, finite or infinite, is \emph{good} if it contains no squares besides 
\texttt{00}, \texttt{11}, \texttt{001001}, or \texttt{110110}.
It is easy to check that a bi-infinite word is good if and only if it
avoids $SQ_4$ and $\acc{\texttt{0000},\texttt{1111},\texttt{0101},\texttt{1010},\texttt{10010},\texttt{01101}}$.
We have checked that $g_4(b_3)$ is good.

We construct the set $S_4^{100}$ defined as follows:
a word $v$ is in $S_4^{100}$ if and only if there exists a good word $pvs$ such that $|p|=|v|=|s|=100$.
We check that $S_4^{100}$ is exactly the set of factors of length 100 of $g_4(b_3)$.
Thus, if $r_2$ is any bi-infinite good word, its factors of length 100
are also factors of $g_4(b_3)$.
Now, every element of $S_4^{100}$ contains the factor $g_4(\texttt{0})$
and every element of $S_4^{100}$ with prefix $g_4(\texttt{0})$ has a prefix in $\acc{g_4(\texttt{0120}),g_4(\texttt{01210}),g_4(\texttt{020}),g_4(\texttt{0210})}$.
So $r_2$ is in $\acc{g_4(\texttt{012}),g_4(\texttt{0121}),g_4(\texttt{02}),g_4(\texttt{021})}^\omega$
and thus $r_2$ is in $\acc{g_4(\texttt{0}),g_4(\texttt{1}),g_4(\texttt{2})}^\omega$.
Then $r_2$ has the same factor set as $g_4(b_3)$ by~\Cref{lem:b3}.
\end{proof}
%

\begin{corollary}
There exists an infinite $3^+$-free binary word with two overlaps and four squares, avoiding squares of even periods.
\end{corollary}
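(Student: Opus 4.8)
The plan is to take $g_4(b_3)$ itself as the desired infinite word and to read off every required property from the Theorem, working entirely at the level of its factor set. By the Theorem, $g_4(b_3)$ is good, so its complete list of squares is exactly $\texttt{00}$, $\texttt{11}$, $\texttt{001001}$, $\texttt{110110}$ (all four occur already in a short prefix, which is checked directly). The periods of these squares are $1,1,3,3$, all odd; since $g_4(b_3)$ has no other squares, it contains no square of even period. This settles the even-period requirement at once and confirms the ``four squares''.

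Next I would count the overlaps. The key observation is that a repetition of period $p=|u|$ and exponent strictly greater than $2$ contains the square $u^2$; hence every overlap of $g_4(b_3)$ has period in $\{1,3\}$, the set of periods of its admissible squares. For period $1$, the word contains $\texttt{000}$ and $\texttt{111}$ but not $\texttt{0000}$ or $\texttt{1111}$ (these being forbidden squares of period $2$), so the maximal period-$1$ repetitions are precisely $\texttt{000}$ and $\texttt{111}$, each of exponent $3$. For period $3$, extending $\texttt{001001}$ or $\texttt{110110}$ to exponent greater than $2$ forces a period-$3$ square outside the admissible list; for instance $\texttt{0010010}$ contains $\texttt{010010}=(\texttt{010})^2$, which cannot occur in a good word. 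So there is no period-$3$ overlap, and $\texttt{000}$ and $\texttt{111}$ are the only two overlaps.

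Finally, for $3^+$-freeness I would show that the maximal exponent of a repetition in $g_4(b_3)$ is exactly $3$. Any factor of exponent greater than $3$ is in particular an overlap, hence of period $1$ or $3$ by the same observation; period $1$ would force $\texttt{0000}$ and period $3$ would force a forbidden period-$3$ square, both excluded. Thus no factor has exponent exceeding $3$, the value $3$ being attained by $\texttt{000}$ and $\texttt{111}$, so $g_4(b_3)$ is $3^+$-free.

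The argument is essentially a corollary-level reading of the Theorem; the only point needing care is the uniform handling of overlaps and of powers of exponent above $3$, which is achieved by the remark that a repetition of exponent greater than $2$ and period $p$ always contains a square of period $p$. This reduces every case to the finite list of admissible square periods $\{1,3\}$, after which the conclusions follow from the goodness of $g_4(b_3)$ alone.
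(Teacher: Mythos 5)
Your proof is correct and takes the same route the paper intends: the corollary is stated without proof as an immediate consequence of the preceding theorem, with $g_4(b_3)$ as the witness, and your verification (reducing overlaps and higher powers to the finite list $\{1,3\}$ of admissible square periods via the observation that an exponent-$>2$ repetition of period $p$ contains a period-$p$ square) correctly fills in the details the paper leaves implicit.
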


Notice that Theorem 4 in \cite{BC-IPL} gives the construction of infinite words containing
only the four squares \texttt{00}, \texttt{11}, \texttt{001001}, \texttt{100100} and the two cubes
\texttt{000}, \texttt{111}. However, they contain a third overlap: \texttt{1001001}.

%

\subsection{Five squares}\label{ssec:5sq}
Let $g_5$ be the morphism from $\Sigma_3^*$ to $\Sigma_2^*$ defined by

$$\begin{array}{l}
 g_5(\texttt{0})=\texttt{0000100000111000011000111},\\
 g_5(\texttt{1})=\texttt{000010000011000111},\\
 g_5(\texttt{2})=\texttt{0000011}.\\
\end{array}$$

\begin{theorem}
Every binary bi-infinite word containing only the squares \textnormal{\texttt{00}}, \textnormal{\texttt{11}}, \textnormal{\texttt{0000}}, \textnormal{\texttt{0001100011}}, and \textnormal{\texttt{1000010000}}
has the same factor set as $g_5(b_3)$.
\end{theorem}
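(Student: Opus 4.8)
The plan is to follow the template of the preceding theorem verbatim, replacing $g_4$ by $g_5$ and raising the repetition threshold. Call a binary word \emph{good} if it contains no square other than \texttt{00}, \texttt{11}, \texttt{0000}, \texttt{0001100011}, and \texttt{1000010000}. Since the longest period among these five squares is $5$ (the last two are $(\texttt{00011})^2$ and $(\texttt{10000})^2$), a bi-infinite word is good if and only if it avoids $SQ_6$ together with a suitable finite set $F$ of short forbidden factors; I would pin down $F$ by a routine finite-automaton computation, noting that it must contain at least \texttt{000000} and \texttt{1111} (to forbid over-long runs, the run bound being asymmetric here) together with the disallowed short squares of period at most $5$. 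First I would run the Cassaigne--Rosenfeld test to confirm that $g_5(b_3)$ is itself good.

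The combinatorial heart is a synchronization argument carried out on a finite window. I would fix a window length $N$ comfortably larger than the longest block $g_5(\texttt{0})$, of length $25$ (a couple of hundred should suffice), and build the set $S_5^N$ of all length-$N$ words $v$ admitting a good extension $pvs$ with $|p|=|s|=N$. The decisive machine check is that $S_5^N$ coincides \emph{exactly} with the set of factors of length $N$ of $g_5(b_3)$; granting this, every bi-infinite good word $r_2$ has all of its length-$N$ factors among the factors of $g_5(b_3)$.

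Next I would extract the parsing from $S_5^N$, again by inspection of the finite set. I expect to verify that every element of $S_5^N$ contains a full block $g_5(\texttt{0})$, and that every element having $g_5(\texttt{0})$ as a prefix in fact has a prefix in $\{g_5(\texttt{0120}),g_5(\texttt{01210}),g_5(\texttt{020}),g_5(\texttt{0210})\}$. These four continuations encode exactly the factors \texttt{012}, \texttt{0121}, \texttt{02}, \texttt{021} that can separate two consecutive \texttt{0}'s in a square-free ternary word avoiding \texttt{010} and \texttt{212}. Sliding the window so that it begins at an occurrence of $g_5(\texttt{0})$ and iterating then forces $r_2\in\{g_5(\texttt{012}),g_5(\texttt{0121}),g_5(\texttt{02}),g_5(\texttt{021})\}^\omega\subseteq\{g_5(\texttt{0}),g_5(\texttt{1}),g_5(\texttt{2})\}^\omega$.

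Finally I would invoke \Cref{lem:b3}. Its hypotheses hold: writing $a=\texttt{000010000011}$, $x=\texttt{1000011}$, and $b=\texttt{000111}$ gives $g_5(\texttt{0})=axb$ and $g_5(\texttt{1})=ab$; and since a good word avoids $SQ_6$ it avoids $SQ_7$, matching $t=7=\min\{25,18,7\}$. The lemma then yields that $r_2$ has the same factor set as $g_5(b_3)$. The main obstacle is not conceptual but the verification that $S_5^N$ introduces no spurious factors beyond those of $g_5(b_3)$: this equality is precisely what would fail if $N$ were chosen too small, so the real work is to fix $N$ large enough that the two finite sets provably agree and the prefix-parsing property becomes exhaustive.
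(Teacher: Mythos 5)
Your proposal matches the paper's proof essentially step for step: the same notion of good word, the same synchronization set $S_5^{100}$ checked against the length-$100$ factors of $g_5(b_3)$, the same parsing into $\acc{g_5(\texttt{012}),g_5(\texttt{0121}),g_5(\texttt{02}),g_5(\texttt{021})}^\omega$, and the same application of \Cref{lem:b3} (your factorization $a=\texttt{000010000011}$, $x=\texttt{1000011}$, $b=\texttt{000111}$ is correct, as is the bound $t\le\min\acc{25,18,7}$). The only divergence is practical rather than mathematical: a direct Cassaigne--Rosenfeld check that $g_5(b_3)$ avoids $SQ_6$ is computationally out of reach, so the paper instead verifies goodness on factors of length $200$ and rules out longer squares by observing that every length-$29$ factor of $g_5(b_3)$ contains \texttt{00000}, reducing the task to checking avoidance of the pattern $ABBBBBCABBBBBC$.
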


\begin{proof}
We say that a binary word, finite or infinite, is \emph{good} if it contains
no squares other than these five squares.
It is easy to check that a bi-infinite word is good if and only if it avoids $SQ_6$ and\\
$\{\texttt{101},\texttt{1001},\texttt{1111},\texttt{000000},\texttt{010001},
\texttt{100010},\texttt{0000010},\texttt{0100001},\texttt{1000110},\texttt{1110001}\}$.

First, we show that $g_5(b_3)$ is good.
A computer check shows that the factors of length 200 of $g_5(b_3)$ are good.
Now we have to show that $g_5(b_3)$ also avoids larger squares.
Unfortunately, checking directly that $g_5(b_3)$ avoids $SQ_6$ with Rosenfeld's program
is out of reach because 6 variables is too many for a morphism as large as $g_5$.
Thus, we had to resort to the following workaround.
Every factor of length 29 of $g_5(b_3)$ contains the factor \texttt{00000}.
So if $g_5(b_3)$ contains a square with period at least 31, then $g_5(b_3)$ contains an occurrence
of the pattern $P=ABBBBBCABBBBBC$ (such that $B\mapsto\texttt{0}$).
However, we have checked that $g_5(b_3)$ avoids~$P$. So $g_5(b_3)$ avoids $SQ_6$.

We construct the set $S_5^{100}$ defined as follows:
a word $v$ is in $S_5^{100}$ if and only if there exists a good word $pvs$ such that $|p|=|v|=|s|=100$.
We check that $S_5^{100}$ is exactly the set of factors of length 100 of $g_5(b_3)$.
Using the same argument as in the previous proof, we obtain that every bi-infinite good word $r_2$
is in $\acc{g_5(\texttt{0}),g_5(\texttt{1}),g_5(\texttt{2})}^\omega$.
Then $r_2$ has the same factor set as $g_5(b_3)$ by~\Cref{lem:b3}.
\end{proof}


\subsection{Repetition threshold of walks on a graph}\label{ssec:p5}
A \emph{walk} in a vertex-labelled directed graph $G$ is a finite or infinite word
$w=w_1w_2\ldots$ such that every letter $w_i$ is a vertex and every factor $w_iw_{i+1}$ is an arc in $G$.
If, in full generality, we consider directed graphs such that 
loops and digons (two arcs with opposite direction) are allowed, then this is equivalent 
to consider words with allowed (or forbidden) factors of length 2.
Now we can consider the repetition threshold of $G$, that is, $RT(G)$ is the least critical exponent over infinite walks on $G$.
In most cases, there are exponentially many words that are $RT(G)^+$-free/$RT(G)$-free walks on $G$.

We revisit the results of Currie~\cite{Currie91} and Harju~\cite{harju2011} who have characterized
the (undirected) graphs that admit an infinite square-free walk.
If every component of a graph is a path on at most 4 vertices, then it has no infinite square-free walk.
Conversely, there exists an infinite square-free walk on the graphs $K_3$, $C_4$, $K_{1,3}$, and $P_5$, see~\Cref{fig:graphs}.
\begin{itemize}
 \item $RT(K_3)=\tfrac74^+$ by the result of Dejean~\cite{Dejean:1972}.
 \item $RT(C_4)=\tfrac53^+$ is a corollary of Lemma 16 in~\cite{badkobeh2022avoiding}.
 \item $RT(K_{1,3})=\tfrac{15}7^+$ is reached by the image of every ternary $\tfrac74^+$-free word by the morphism $\texttt{03}/\texttt{13}/\texttt{23}$.
\end{itemize}

The 5-path $P_5$ is noted $\texttt{201}\hat{\texttt{2}}\hat{\texttt{0}}$, see~\Cref{fig:graphs}. 
\begin{theorem}
Every bi-infinite square-free walk of $P_5$ has the same factor set as $k_5(b_3)$, where $k_5$ is\\
$$\begin{array}{rcl}
\texttt{0}&\to&{\texttt{01}}\hat{\texttt{2}}\hat{\texttt{0}}\hat{\texttt{2}}\texttt{1},\\
\texttt{1}&\to&{\texttt{01}}\hat{\texttt{2}}\texttt{1},\\
\texttt{2}&\to&\texttt{02}.
\end{array}$$
\end{theorem}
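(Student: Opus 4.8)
The plan is to mimic the two preceding proofs and apply \Cref{lem:b3} with $m=k_5$. First I would fix terminology: call a bi-infinite word over $\acc{\texttt{0},\texttt{1},\texttt{2},\hat{\texttt{2}},\hat{\texttt{0}}}$ \emph{good} if it is a square-free walk on $P_5$. A word is a walk on $P_5$ exactly when each of its length-$2$ factors is an edge of the path $\texttt{201}\hat{\texttt{2}}\hat{\texttt{0}}$, so being good amounts to avoiding $SQ_1$ (equivalently the pattern $AA$) together with the finite set of length-$2$ words that are not edges of $P_5$.

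Next I would verify that $k_5(b_3)$ is good. That it is a walk is immediate and independent of which ternary word $k_5$ is applied to: each internal length-$2$ factor of $k_5(\texttt{0})$, $k_5(\texttt{1})$, and $k_5(\texttt{2})$ is an edge, and since every image begins with $\texttt{0}$ and ends with $\texttt{1}$ or $\texttt{2}$, each junction between consecutive images is $\texttt{10}$ or $\texttt{20}$, again edges. For square-freeness I would run Rosenfeld's program on $k_5(b_3)$ against the pattern $AA$; because this pattern has a single variable the computation is within reach, so---unlike the case of $g_5$---no auxiliary-pattern workaround is needed.

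Then, exactly as in the four- and five-square theorems, I would compute for a suitable window length $N$ the set $S^N$ of those length-$N$ words $v$ that admit a good word $pvs$ with $|p|=|v|=|s|=N$, and check by computer that $S^N$ is precisely the set of length-$N$ factors of $k_5(b_3)$. It follows that every length-$N$ factor of an arbitrary bi-infinite good word $w$ is a factor of $k_5(b_3)$. To factor $w$ into blocks I would exploit that the letter $\hat{\texttt{0}}$ occurs only inside $k_5(\texttt{0})$, and there only at one fixed position; for $N$ large enough every length-$N$ factor of $k_5(b_3)$ contains $\hat{\texttt{0}}$, as the $S^N$ computation confirms, hence so does every long factor of $w$. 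Thus each occurrence of $\hat{\texttt{0}}$ locates a $k_5(\texttt{0})$-block, after which the second-letter test ($\texttt{1}$ versus $\texttt{2}$) parses the intervening $k_5(\texttt{1})$- and $k_5(\texttt{2})$-blocks, giving a unique factorization and so $w\in\acc{k_5(\texttt{0}),k_5(\texttt{1}),k_5(\texttt{2})}^\omega$.

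Finally I would apply \Cref{lem:b3}. Its hypotheses hold with $a=\texttt{01}\hat{\texttt{2}}$, $b=\texttt{1}$, and $x=\hat{\texttt{0}}\hat{\texttt{2}}$, giving $k_5(\texttt{1})=ab$ and $k_5(\texttt{0})=axb$; moreover $\min\acc{|k_5(\texttt{0})|,|k_5(\texttt{1})|,|k_5(\texttt{2})|}=2$, and a square-free word avoids $SQ_2$. The lemma then yields that $w$ has the same factor set as $k_5(b_3)$, which is the claim. I expect the synchronization step to be the main obstacle: the three images have unequal lengths $6$, $4$, and $2$, and although the marker letter $\hat{\texttt{0}}$ makes the decomposition unusually clean, the delicate, computation-heavy part is choosing $N$ and verifying that $S^N$ excludes every spurious good word, just as in the earlier theorems.
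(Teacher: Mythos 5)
Your proposal is correct and ends at the same place as the paper --- both arguments reduce the theorem to \Cref{lem:b3} after showing that $k_5(b_3)$ is a square-free walk and that every bi-infinite square-free walk of $P_5$ factors over $\acc{k_5(\texttt{0}),k_5(\texttt{1}),k_5(\texttt{2})}$ --- but the two intermediate steps are executed quite differently. For square-freeness of $k_5(b_3)$ you propose a run of Rosenfeld's program on the pattern $AA$; the paper instead observes that the coloring $c$ sending $\hat{\texttt{0}}\mapsto\texttt{0}$ and $\hat{\texttt{2}}\mapsto\texttt{2}$ satisfies $c\circ k_5=\texttt{012021}/\texttt{0121}/\texttt{02}$, which is the square of the Hall--Thue morphism, so $c(k_5(b_3))=b_3$ and square-freeness is inherited with no computation at all. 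For the factorization step you propose the $S^N$ window computation from the four- and five-square theorems together with a marker argument based on $\hat{\texttt{0}}$; the paper exploits the fact that $P_5$ is a path, so extensions are heavily forced: \texttt{010} is excluded because it extends deterministically to \texttt{20102}, then \texttt{0201020}, then to a word containing $(\texttt{1020})^2$, and by symmetry $\hat{\texttt{2}}\texttt{1}\hat{\texttt{2}}$ is excluded, after which the return words of \texttt{0} in any square-free walk are exactly $\texttt{02}$, $\texttt{01}\hat{\texttt{2}}\texttt{1}$, and $\texttt{01}\hat{\texttt{2}}\hat{\texttt{0}}\hat{\texttt{2}}\texttt{1}$. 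Your route is sound (your identification of $a=\texttt{01}\hat{\texttt{2}}$, $b=\texttt{1}$, $x=\hat{\texttt{0}}\hat{\texttt{2}}$ and the bound $t\le 2$ in \Cref{lem:b3} are exactly right), but it trades two short hand arguments for two computer checks, one of which (that $S^N$ equals the length-$N$ factor set of $k_5(b_3)$ for your chosen $N$) is not guaranteed in advance to succeed and would have to be verified empirically; the paper's version of this theorem is one of the few in the article that needs no computation at all.
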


\begin{proof}
Clearly, $k_5(b_3)$ is a walk on $P_5$. To show that $k_5(b_3)$ is square-free,
we consider the coloring $c$ of the vertices such that $c(\texttt{0})=\texttt{0}$, $c(\texttt{1})=\texttt{1}$,
$c(\texttt{2})=\texttt{2}$, $c(\hat{\texttt 0})=\texttt{0}$, and $c({\hat{\texttt 2}})=\texttt{2}$.
Notice that $c\circ k_5=\texttt{012021}/\texttt{0121}/\texttt{02}$ is the square of $\texttt{012}/\texttt{02}/\texttt{1}$,
so that the colored walk is simply $b_3$. Since $c(k_5(b_3))$ is square-free, $k_5(b_3)$ is square-free.

Consider a bi-infinite square-free walk $r_5$ of $P_5$. Now, we check that \texttt{010} is not a factor of $r_5$.
Indeed, \texttt{010} extends to \texttt{20102}, then to \texttt{0201020}, and then to \texttt{102010201},
which contains the square $(\texttt{1020})^2$.
By symmetry, $\hat{\texttt{2}}{\texttt{1}}\hat{\texttt{2}}$ is not a factor of $r_5$ either.
Then we can check that the only factors of $r_5$ containing \texttt{0} only as a prefix and a suffix are
$\texttt{020}$, $\texttt{01}\hat{\texttt{2}}{\texttt 10}$,
and $\texttt{01}\hat{\texttt{2}}\hat{\texttt{0}}\hat{\texttt{2}}{\texttt 10}$.
Thus, $r_5$ is in $\{\texttt{01}\hat{\texttt 2}\hat{\texttt 0}{\hat{\texttt 2}}{\texttt 1},\texttt{01}{\hat{\texttt 2}}{\texttt 1},{\texttt{02}}\}^\omega$.

By~\Cref{lem:b3}, $r_5$ has the same factor set as $k_5(b_3)$.
\end{proof}

\begin{figure}[htbp]
\begin{center}
\includegraphics[width=12cm]{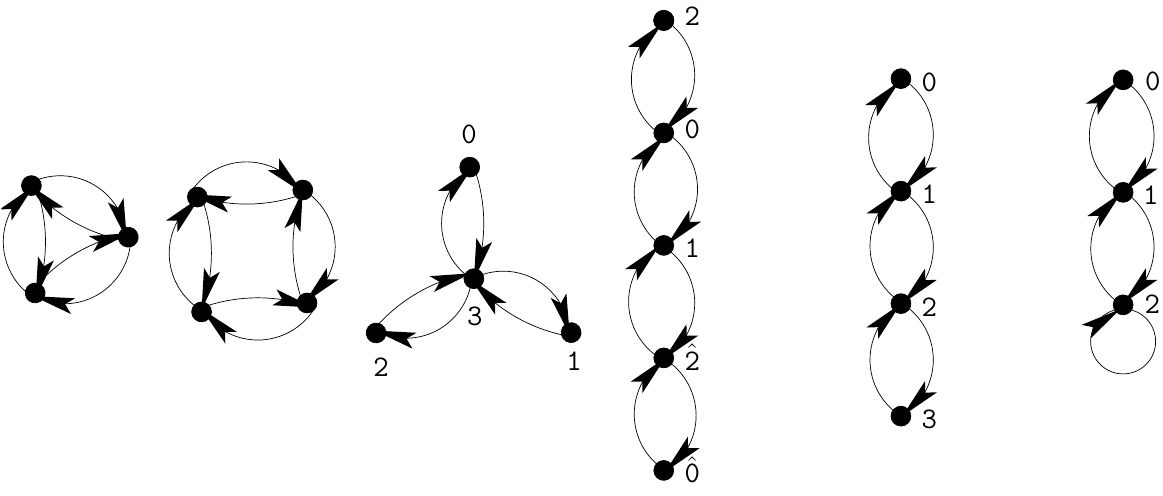}
\caption{The graphs $C_3$, $C_4$, $K_{1,3}$, $P_5$, $P_4$, and $P_3^\star$}
\label{fig:graphs}
\end{center}
\end{figure}

Beyond square-free walks, the graphs $P_4$ and $P_3^\star$ depicted in \Cref{fig:graphs} admit overlap-free walks, that is, $RT(P_4)=2^+$ and $RT(P_3^\star)=2^+$.
To prove this, we use an ad-hoc analog of~\Cref{lem:b3} suited for these overlap-free words.

\begin{lemma}\label{lem:b3+}
Let $w$ be a bi-infinite word and let $m:\Sigma^*_3\rightarrow\Sigma^*_k$ be such that 
\begin{itemize}
 \item $w$ is in $\acc{m(\textnormal{\texttt{0}}),m(\textnormal{\texttt{1}}),m(\textnormal{\texttt{2}})}^\omega$,
 \item $m(\textnormal{\texttt{0}})=abxb$, $m(\textnormal{\texttt{1}})=ab$, and $m(\textnormal{\texttt{2}})=ay$, where $a$ and $b$ are letters and $x$ and $y$ are possibly empty words.
 \item $w$ is overlap-free.
\end{itemize}
then $w$ has the same factor set as $m(b_3)$.
\end{lemma}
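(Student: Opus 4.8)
The plan is to mirror the structure of the proof of~\Cref{lem:b3}, adapting each step to the overlap-free hypothesis. Since $w\in\acc{m(\texttt{0}),m(\texttt{1}),m(\texttt{2})}^\omega$, I write $w=m(r_3)$ for a bi-infinite ternary word $r_3$. The goal is to show that $r_3$ has the same factor set as $b_3$, after which taking the $m$-image gives the conclusion. By the characterization $\acc{AA,\texttt{010},\texttt{212}}$ of $b_3$ quoted in the introduction, it suffices to show that $r_3$ is square-free and avoids $\texttt{010}$ and $\texttt{212}$; alternatively, I can again invoke Lemma 3.2 of~\cite{aabbc} and reduce to showing $r_3$ is square-free and avoids factors of the form $\texttt{0}u\texttt{1}u\texttt{0}$.

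First I would rule out squares in $r_3$. If $r_3$ contained a square, its $m$-image would be a square in $w$ (of period a sum of block lengths), and since $w$ is overlap-free it is in particular square-free, a contradiction — so $r_3$ is square-free. The more delicate step, as in~\Cref{lem:b3}, is to eliminate factors $\texttt{0}u\texttt{1}u\texttt{0}$. Here is where the hypotheses $m(\texttt{0})=abxb$ and $m(\texttt{1})=ab$ enter. Computing $m(\texttt{0}u\texttt{1}u\texttt{0})=abxb\,m(u)\,ab\,m(u)\,abxb$, I would isolate the repeated block $b\,m(u)\,a$: the word reads $ab(xb\,m(u)\,a)(b\,m(u)\,ab)xb$, and one checks that it contains the overlap $(b\,m(u)\,a)\,b\,(b\,m(u)\,a)$ — more precisely, since $m(\texttt{0})$ ends in $b$ and $m(\texttt{1})=ab$ begins the second copy, the junction produces a repetition of period $|b\,m(u)\,a|=|m(u)|+1$ with exponent exceeding $2$. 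Because $m(\texttt{0})=abxb$ supplies an extra letter $b$ beyond what $m(\texttt{1})$ gives, the square $(b\,m(u)\,a)^2$ present in the~\Cref{lem:b3} argument is extended by one letter to an overlap, contradicting overlap-freeness of $w$. So $r_3$ avoids $\texttt{0}u\texttt{1}u\texttt{0}$.

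The main obstacle is making the overlap extraction in the previous paragraph fully rigorous: I must verify that the shapes $m(\texttt{0})=abxb$ and $m(\texttt{1})=ab$ genuinely force an \emph{overlap} (exponent strictly above $2$) rather than merely a square, paying attention to the boundary letters so the period alignment is exact. The role of $m(\texttt{2})=ay$ is presumably only to guarantee that all three blocks share the common prefix letter $a$, which keeps the block decomposition of $w$ consistent at the left boundaries; I would confirm it is not otherwise needed for the overlap argument, or note precisely where it is used. Once $r_3$ is shown square-free and free of $\texttt{0}u\texttt{1}u\texttt{0}$, Lemma 3.2 of~\cite{aabbc} gives that $r_3$ avoids $\acc{\texttt{010},\texttt{212}}$, so $r_3$ has the same factor set as $b_3$, and applying $m$ yields that $w$ has the same factor set as $m(b_3)$.
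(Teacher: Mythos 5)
There is a genuine gap in your square-freeness step. You argue that if $r_3$ contained a square, its $m$-image would be a square in $w$, ``and since $w$ is overlap-free it is in particular square-free, a contradiction.'' But overlap-freeness does \emph{not} imply square-freeness: an overlap has exponent strictly greater than $2$, so an overlap-free word may well contain squares of exponent exactly $2$. Indeed, in the very applications of this lemma (\Cref{thm:p4p3}), the words $k_4(b_3)$ and $k_3(b_3)$ contain squares such as $\texttt{22}$ (from $k_3(\texttt{0})=\texttt{122}$) while being overlap-free; this is precisely why the lemma assumes only overlap-freeness rather than square-freeness. The correct argument is the one you almost stumble onto when you wonder about the role of $m(\texttt{2})=ay$: since all three images $m(\texttt{0})$, $m(\texttt{1})$, $m(\texttt{2})$ begin with the same letter $a$, a square $uu$ in $r_3$ produces not merely $m(u)m(u)$ in $w$ but $m(u)m(u)a$ (the block following the square starts with $a$, and $m(u)$ itself starts with $a$), which is a repetition of period $|m(u)|$ and exponent exceeding $2$, i.e.\ an overlap. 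That is where the hypothesis on $m(\texttt{2})$ is genuinely used, and without it your first step collapses.

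Your second step (eliminating $\texttt{0}u\texttt{1}u\texttt{0}$) is the right idea and matches the paper: inside $m(\texttt{0}u\texttt{1}u\texttt{0})=abxb\,m(u)\,ab\,m(u)\,abxb$ one finds the factor $b\,m(u)\,ab\,m(u)\,ab$, which is a repetition of period $|b\,m(u)\,a|=|m(u)|+2$ (not $|m(u)|+1$ as you wrote) and length $2(|m(u)|+2)+1$, hence an overlap. The concluding appeal to the lemma from~\cite{aabbc} and the final $m$-image step are as in the paper. So the structure of your proposal is right, but the first step as written is false and must be replaced by the common-prefix argument.
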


\begin{proof}
Since $w\in\acc{m(\texttt{0}),m(\texttt{1}),m(\texttt{2})}^\omega$, we write $w=m(r_3)$,
where $r_3$ is a bi-infinite ternary word.
If $r_3$ contains a square, then $m(r_3)$ contains an overlap because of the common prefix $a$.
So $r_3$ is square-free.
Moreover, $r_3$ avoids words of the form $\texttt{0}u\texttt{1}u\texttt{0}$, with $u\in\Sigma_3^*$, since
$m(\texttt{0}u\texttt{1}u\texttt{0})=abxbm(u)abm(u)abxb$ contains $bm(u)abm(u)ab$, which is an overlap.
Now, as proven in~\cite{aabbc}, a bi-infinite ternary square-free word avoids factors of the form
$\texttt{0}u\texttt{1}u\texttt{0}$ with $u\in\Sigma_3^*$ if and only if it avoids $\acc{\texttt{010},\texttt{212}}$.
So $r_3$ has the same factor set as $b_3$.
By taking the $m$-image, $w$ has the same factor set as $m(b_3)$.
\end{proof}

\begin{theorem}{\ }\label{thm:p4p3}
Let $k_4=\textnormal{\texttt{1232}}/\textnormal{\texttt{12}}/\textnormal{\texttt{10}}$ and $k_3=\textnormal{\texttt{122}}/\textnormal{\texttt{12}}/\textnormal{\texttt{10}}$.
\begin{itemize}
 \item Every bi-infinite overlap-free walk of $P_4$ has the same factor set as $k_4(b_3)$.
 \item Every bi-infinite overlap-free walk of $P_3^\star$ has the same factor set as $k_3(b_3)$.
\end{itemize}
\end{theorem}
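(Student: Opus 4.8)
The plan is to mirror the structure used for $P_5$ in the previous theorem, treating both $P_4$ and $P_3^\star$ through the overlap-oriented machinery of \Cref{lem:b3+}. For each graph I would first verify the ``easy'' direction: that $k_4(b_3)$ (resp.\ $k_3(b_3)$) is itself an overlap-free walk on $P_4$ (resp.\ $P_3^\star$). Being a walk is a purely local check on the arcs, so it amounts to confirming that every length-$2$ factor arising from $k_4(\texttt{0}),k_4(\texttt{1}),k_4(\texttt{2})$ and their legal concatenations corresponds to an arc of $P_4$; this reduces to inspecting the finitely many two-letter factors of $k_4(b_3)$. For overlap-freeness I would look for a coloring/projection trick analogous to the $c\circ k_5$ computation: ideally a map under which $k_4(b_3)$ projects onto $b_3$ or some known overlap-free word, so that overlap-freeness of the image forces overlap-freeness of the preimage. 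If no clean projection exists, I would instead invoke the checked-by-computer convention already established in the paper and state that $k_4(b_3)$ has been verified overlap-free by Rosenfeld's program (and similarly for $k_3$).

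The harder direction is to show that \emph{every} bi-infinite overlap-free walk has the claimed factor set. Here I would follow the $P_5$ template exactly. Take an arbitrary bi-infinite overlap-free walk $r$ on $P_4$. The key step is to pin down the allowed local structure: using overlap-freeness together with the adjacency constraints of $P_4$, I would rule out certain short factors (the analogue of eliminating \texttt{010} and $\hat{\texttt{2}}\texttt{1}\hat{\texttt{2}}$ for $P_5$) by extending them forcibly until a forbidden overlap appears. Once the forbidden short factors are identified, I would determine the factors of $r$ that contain a distinguished letter only at their two ends, and argue that this forces a unique desubstitution: $r$ lies in $\acc{k_4(\texttt{0}),k_4(\texttt{1}),k_4(\texttt{2})}^\omega$. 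With that containment in hand, \Cref{lem:b3+} applies directly, provided $k_4$ satisfies the hypotheses of the lemma, i.e.\ that $k_4(\texttt{0})=abxb$, $k_4(\texttt{1})=ab$, $k_4(\texttt{2})=ay$ for letters $a,b$; I would check this against the explicit images $\texttt{1232}$, $\texttt{12}$, $\texttt{10}$ (so $a=\texttt{1}$, $b=\texttt{2}$, $x=\texttt{3}$, $y=\texttt{0}$), and it does. The conclusion that $r$ has the same factor set as $k_4(b_3)$ then follows immediately.

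For $P_3^\star$ the argument is identical in shape, with $k_3=\texttt{122}/\texttt{12}/\texttt{10}$ giving $a=\texttt{1}$, $b=\texttt{2}$, $x=\varepsilon$, $y=\texttt{0}$, which again fits \Cref{lem:b3+}. The only graph-specific work is re-identifying which short factors are forbidden and re-deriving the unique desubstitution step for the adjacency structure of $P_3^\star$; the desubstitution and the invocation of \Cref{lem:b3+} are then formally the same.

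I expect the main obstacle to be the desubstitution step: proving that the local forbidden-factor analysis genuinely forces $r$ into $\acc{k_4(\texttt{0}),k_4(\texttt{1}),k_4(\texttt{2})}^\omega$ with no ambiguity in how blocks are parsed. This requires showing that the occurrences of the anchoring letter are unambiguously spaced and that each maximal inter-anchor block is exactly one of the three images, so that the factorization into blocks is unique and synchronizing. Since the images have distinct lengths and distinct letter content, I anticipate that a bounded-length case analysis (of the kind summarized as a finite computer check elsewhere in the paper) settles it, but it is the one place where the graph's geometry really enters and where an overlooked case could break the uniqueness of the block decomposition.
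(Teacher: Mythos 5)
Your proposal follows essentially the same route as the paper: verify that $k_4(b_3)$ and $k_3(b_3)$ are overlap-free walks (the paper does this by checking the pattern $ABABA$ and the absence of $aaa$), rule out the short factors \texttt{323} (resp.\ \texttt{222}) to show the inter-\texttt{1} blocks are exactly $k_4(\texttt{0}),k_4(\texttt{1}),k_4(\texttt{2})$ (resp.\ the $k_3$-images), and conclude via \Cref{lem:b3+}. One small remark: you need only the \emph{existence} of a factorization into the three blocks to invoke \Cref{lem:b3+}, not its uniqueness, so the ``synchronization'' worry you flag as the main obstacle is not actually needed.
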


\begin{proof}{\ }
\begin{itemize}
\item It is easy to check that $k_4(b_3)$ is a walk on $P_4$.
We have also checked that $k_4(b_3)$ avoids the pattern $ABABA$.
Since $k_4(b_3)$ contains no factor $aaa$ for some letter $a$, it is overlap-free.

Consider a bi-infinite overlap-free walk $r_4$ of $P_4$.
Notice that $r_4$ cannot contain \texttt{323} since it would extend to the overlap \texttt{23232}.
Then we can check that the only factors of $r_4$ containing \texttt{1} only as a prefix and a suffix are
\texttt{12321}, \texttt{121}, and \texttt{101}.
So $r_4$ is in $\acc{\texttt{1232},{\texttt{12}},{\texttt{10}}}^\omega$.
By~\Cref{lem:b3+}, $r_4$ has the same factor set as $k_4(b_3)$.

\item It is easy to check that $k_3(b_3)$ is a walk on $P_3^\star$.
We have also checked that $k_3(b_3)$ avoids the pattern $ABABA$.
Since $k_3(b_3)$ contains no factor $aaa$ for some letter $a$, it is overlap-free.

Consider a bi-infinite overlap-free walk $r_3$ of $P_3^\star$.
Since $r_3$ does not contain \texttt{222}, the only factors of $r_3$ containing \texttt{1}
only as a prefix and a suffix are \texttt{1221}, \texttt{121}, and \texttt{101}.
So $r_3$ is in $\acc{\texttt{122}, \texttt{12}, \texttt{10}}^\omega$.
By~\Cref{lem:b3+}, $r_3$ has the same factor set as $k_3(b_3)$.
\end{itemize}
\end{proof}

Notice that $k_3(b_3)$ is obtained from $k_4(b_3)$ by removing every letter \texttt{3}.



\section{Images of $b_5$}\label{sec:b5}

\subsection{Twelve squares}\label{ssec:12sq}
The first author has shown (Proposition 3 in~\cite{Badkobeh-TCS}) that there exists a binary morphic image
of $b_3$ that contains 11 squares and the overlaps \texttt{10101}, \texttt{1001001}, and \texttt{0110110}.
This is best possible in the sense that we have checked by backtracking that no infinite binary word contains
at most 11 distinct squares and at most two overlaps in $\acc{\texttt{10101}, \texttt{1001001}, \texttt{0110110}}$.
The next result shows that we can get rid of \texttt{1001001} and \texttt{0110110} if we allow one more square.
Let $h_{12}$ be the morphism from $\Sigma_5^*$ to $\Sigma_2^*$ defined by

$$\begin{array}{l}
 h_{12}(\texttt{0})=\texttt{0011},\\
 h_{12}(\texttt{1})=\texttt{01},\\
 h_{12}(\texttt{2})=\texttt{001},\\
 h_{12}(\texttt{3})=\texttt{011},\\
 h_{12}(\texttt{4})=\varepsilon.\\
\end{array}$$

\begin{theorem}
Every bi-infinite binary word containing only \textnormal{\texttt{01010}} as an overlap
and at most 12 distinct squares has the same factor set as $h_{12}(b_5)$.
\end{theorem}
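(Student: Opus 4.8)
The plan is to follow the template established by the proofs in Sections 3.1 and 3.2, which reduce a characterization over a binary alphabet to the characterization of $b_5$ via a structural lemma. However, since $h_{12}$ maps $\Sigma_5^*$ to $\Sigma_2^*$ and $\texttt{4}\mapsto\varepsilon$ is erasing, I cannot invoke \Cref{lem:b3} or \Cref{lem:b3+} directly; instead I expect the argument to mirror their structure while appealing to the stated characterization of $b_5$ by $\acc{AA,\texttt{02},\texttt{03},\dots,\texttt{4234}}$. First I would fix terminology by calling a binary word \emph{good} if its only overlap is \texttt{01010} and it contains at most the 12 relevant squares, and then translate this into a finite list of avoided factors together with an avoided repetition pattern, exactly as in the four- and five-square proofs. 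The first concrete task is a computer check that $h_{12}(b_5)$ is itself good: verifying the finite forbidden factors on factors up to some bounded length, and then handling long squares/overlaps by the same workaround used for $g_5$, namely observing that some short word (here likely a block forcing a \texttt{00} or \texttt{11} boundary) occurs densely enough that any long repetition would induce an occurrence of a low-variable pattern $P$ that can be tested with Rosenfeld's program against $h_{12}(b_5)$.

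Next I would build a finite set $S^{N}$ of length-$N$ words that extend to good words with margins of length $N$ on each side, for a suitably large window $N$ (the previous proofs used $100$; the larger source alphabet and the erased letter \texttt{4} may force a bigger $N$), and verify by computer that $S^{N}$ coincides exactly with the set of length-$N$ factors of $h_{12}(b_5)$. This shows that every bi-infinite good word $r_2$ has all its factors among those of $h_{12}(b_5)$. The crux of the combinatorial step is then a \emph{synchronization/desubstitution} argument: I would show that every element of $S^{N}$ contains a distinguished marker factor (analogous to the factor $g_4(\texttt{0})$ or $g_5(\texttt{0})$ in the earlier proofs) whose occurrences determine a unique factorization of $r_2$ into the blocks $\acc{h_{12}(\texttt{0}),h_{12}(\texttt{1}),h_{12}(\texttt{2}),h_{12}(\texttt{3})}$. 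Because \texttt{4} is erased, its presence in $b_5$ leaves no trace in the image, so the factorization I recover is really a factorization over the \emph{non-erased} blocks; I must check that these blocks are \emph{uniquely decipherable} and that the induced preimage word is a bi-infinite word over $\Sigma_5$ (with the \texttt{4}'s reinserted in the only positions consistent with the block boundaries) that is square-free and avoids the full forbidden list characterizing $b_5$.

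Concretely, once $r_2$ is written as $h_{12}(r_5)$ for a bi-infinite $r_5\in\Sigma_5^\omega$, I would argue that $r_5$ must be square-free (a square in $r_5$ would push an overlap other than \texttt{01010}, or a thirteenth square, into $r_2$) and must avoid each forbidden two- and three-letter word and each longer forbidden factor in the $b_5$ list, by exhibiting for each one a short forbidden image pattern in $r_2$. The erasing of \texttt{4} is the delicate point here: forbidden factors of $b_5$ that involve \texttt{4} (such as $\texttt{14}$, $\texttt{24}$, $\texttt{40}$, $\texttt{41}$, $\texttt{43}$, $\texttt{304}$) must be excluded by reasoning about which non-erased blocks may legally abut, since \texttt{4} contributes no letters of its own. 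I would handle this by checking that the allowed adjacencies of image blocks in $r_2$ force exactly the allowed adjacencies in $b_5$ around the (invisible) \texttt{4}'s. Having shown $r_5$ has the same factor set as $b_5$, taking the $h_{12}$-image yields that $r_2$ has the same factor set as $h_{12}(b_5)$.

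The main obstacle I anticipate is precisely this erased letter: unlike \Cref{lem:b3}, there is no clean lemma to quote, so the desubstitution and the transfer of the $b_5$ forbidden-factor list through an erasing morphism must be done by hand, and I will need to confirm that $h_{12}$ restricted to $\acc{\texttt{0},\texttt{1},\texttt{2},\texttt{3}}$ is a comma-free-like code whose block boundaries can be unambiguously located from the marker factor inside every window of $S^{N}$. If synchronization fails to be unique from a single marker, the fallback is to enlarge the window $N$ and base synchronization on a longer distinguished factor, at the cost of heavier computation, which is routine but not conceptually different.
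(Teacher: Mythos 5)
Your outline matches the paper's proof only up to the point where the real difficulty begins, and there it diverges: you propose to desubstitute $r_2$ directly into the four non-erased blocks $h_{12}(\texttt{0}),\dots,h_{12}(\texttt{3})$ and then transfer the entire seventeen-element forbidden list characterizing $b_5$ back through the erasing morphism, reinserting the invisible \texttt{4}'s by hand. The paper avoids this entirely with one algebraic observation you did not find: $h_{12}$ factors as $g_{12}\circ M_2$, where $M_2=\texttt{02}/\texttt{1}/\texttt{0}/\texttt{12}/\varepsilon$ sends $b_5$ to Thue's word $w_3$ (characterized by $AA$ and $\acc{\texttt{121},\texttt{212}}$) and $g_{12}=\texttt{001}/\texttt{01}/\texttt{1}$ is non-erasing. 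All the erasing is absorbed into the already-known identity $w_3=M_2(b_5)$, so the desubstitution step becomes trivial: a good word avoids \texttt{000}, hence lies in $\acc{\texttt{001},\texttt{01},\texttt{1}}^\omega$, and one only has to check that the ternary preimage $r_3$ avoids $AA$, \texttt{121} and \texttt{212} --- two forbidden factors instead of seventeen, with no letter to reinsert.

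The gap in your route is precisely the part you flag as ``delicate'' and defer: there is no canonical preimage $r_5\in\Sigma_5^{\mathbb{Z}}$ of $r_2$ under $h_{12}$, since \texttt{4} leaves no trace, so ``the only positions consistent with the block boundaries'' is not well defined from $r_2$ alone. You would have to prove that some insertion of \texttt{4}'s exists that simultaneously avoids $AA$ (note $\texttt{44}\mapsto\varepsilon$, so squares over $\texttt{4}^*$ are invisible in the image) and all of $\texttt{14},\texttt{24},\texttt{40},\texttt{41},\texttt{43},\texttt{304},\texttt{3423},\texttt{4234}$, which amounts to reconstructing the combinatorics of $b_5$ around \texttt{4} from block adjacencies in $r_2$. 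This is not obviously false, but it is the entire content of the theorem at that point and you give no argument for it; as written the proof is incomplete. If you want to salvage your approach without the factorization trick, the natural fix is exactly the paper's: find an intermediate non-erasing stage, which here is handed to you by the known fact that $w_3$ is an $M_2$-image of $b_5$.
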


\begin{proof}
We say that a binary word, finite or infinite, is \emph{good} if it contains no overlap
other than \texttt{01010} and no squares other than the following 12 squares:
$\texttt{0}^2$, $\texttt{1}^2$, $(\texttt{01})^2$, $(\texttt{10})^2$, $(\texttt{010})^2$,
$(\texttt{0110})^2$, $(\texttt{1001})^2$, $(\texttt{100110})^2$, $(\texttt{011001})^2$,
$(\texttt{101001})^2$, $(\texttt{100101})^2$, and $(\texttt{1001011001101001})^2$.

First, we show that $h_{12}(b_5)$ is good.
A computer check shows that the factors of length 200 of $h_{12}(b_5)$ are good.
Now we have to show that $h_{12}(b_5)$ also avoids larger squares.
Unfortunately, checking directly that $h_{12}(b_5)$ avoids $SQ_{17}$ with Rosenfeld's program
is out of reach because 17 variables is too much. Thus, we had to resort to the following workaround.
Every factor of length 57 of $h_{12}(b_5)$ contains the factor \texttt{0110010100110}.
So if $h_{12}(b_5)$ contains a square with period at least 59, then $h_{12}(b_5)$ contains an occurrence
of the pattern $P=ABCCBBCBCBBCCBDABCCBBCBCBBCCBD$ (such that $B\mapsto\texttt{0}$ and $C\mapsto\texttt{1}$).
However, we have checked that $h_{12}(b_5)$ avoids~$P$. So $h_{12}(b_5)$ avoids $SQ_{17}$.

Now we prove the characterization. Consider Thue's word $w_3=M_2(b_5)$ where
$M_2=\texttt{02}/\texttt{1}/\texttt{0}/\texttt{12}/\varepsilon$.
The word $w_3$ avoids (and is characterized by) $AA$ and $\acc{\texttt{121},\texttt{212}}$~\cite{BO15}.
See also Theorem 15 in~\cite{OchemRosenfeld2021} for another characterization.
We notice that $h_{12}=g_{12}\circ M_2$, where $g_{12}=\texttt{001}/\texttt{01}/\texttt{1}$.
This implies that $h_{12}(b_5)=g_{12}(M_2(b_5))=g_{12}(w_3)$.

We construct the set $S_{12}^{120}$ defined as follows:
a word $v$ is in $S_{12}^{120}$ if and only if there exists a good word $pvs$ such that $|p|=|v|=|s|=120$.
We check that $S_{12}^{120}$ is exactly the set of factors of length 120 of $g_{12}(w_3)$.
Consider a bi-infinite good word $r_2$. Since $r_2$ avoids \texttt{000}, $r_2$ is in $\acc{\texttt{001},\texttt{01},\texttt{1}}^\omega$,
that is, $r_2=g_{12}(r_3)$ for some bi-infinite ternary word $r_3$.
Since $g_{12}$ is a code and the factors of length 120 of $g_{12}(r_3)$ are factors of $g_{12}(w_3)$,
then the factors of length $\tfrac{120}{3}=40$ of $r_3$ are factors of $w_3$.
In particular, $r_3$ avoids \texttt{121}, \texttt{212} and squares with period at most 20.
Then $r_3$ also avoids squares with period at least 21, since otherwise $g_{12}(r_3)$
would contain a square with period at least 21, contradicting that $r_2=g_{12}(r_3)$ is good.
So $r_3$ avoids $AA$ and $\acc{\texttt{121},\texttt{212}}$, which implies that 
$r_3$ has the same factor set as $w_3$.
By taking the $g_{12}$-image, $r_2$ has the same factor set as $g_{12}(w_3)$.
\end{proof}



\subsection{Few occurrences of $ABBA$}\label{ssec:abba}

A pattern $P$ is \emph{doubled} if every variable appears at least twice in $P$, i.e., $P$ contains no isolated variable.
It is known that doubled patterns are 3-avoidable~\cite{O16}, that many doubled patterns are 2-avoidable~\cite{DO2023},
and it is conjectured that only finitely many doubled patterns are not 2-avoidable.
Let $oc_k(P)$ be the minimum number of distinct occurrences of $P$ in an infinite word over $\Sigma_k$.
So $oc_k(P)=0$ if $P$ is $k$-avoidable and we leave as an exercise the proof that $oc_k(P)=\infty$
if $P$ is not doubled and is not $k$-avoidable.
The many known constructions of infinite binary words with only three distinct squares (see~\cite{GS2021} for a survey)
imply that $oc_2(AA)=3$ and that there are exponentially many such words.
We have checked\footnote{We would like to thank Antoine Domenech for his help in the study of $oc_k(P)$}
that for a few other 2-unavoidable doubled patterns $P$,
there are also exponentially many binary words containing at most $oc_2(P)$ distinct occurrences of $P$.

However, this is not the case for the pattern $ABBA$.
Let us say that a binary word, finite or not, is \emph{thrifty} if it contains at most 8 distinct occurrences of $ABBA$.
Also, let $c$ be the morphism from $\Sigma_5^*$ to $\Sigma_2^*$ defined by
$$\begin{array}{l}
 c(\texttt{0})=\texttt{0010111100},\\
 c(\texttt{1})=\texttt{1101000011},\\
 c(\texttt{2})=\varepsilon,\\
 c(\texttt{3})=\texttt{1101001100},\\
 c(\texttt{4})=\texttt{0010110011}.\\
\end{array}$$
Then the following result implies that $oc_2(ABBA)=8$.
\begin{theorem}
Every bi-infinite thrifty word has the same factor set as $c(b_5)$.
\end{theorem}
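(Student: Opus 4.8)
The plan is to follow the now-familiar template established by the earlier theorems in \Cref{sec:b5}, namely to reduce the characterization of thrifty words to the known characterization of $b_5$ by means of~\Cref{lem:b3} applied after an appropriate factorization. First I would verify that $c(b_5)$ is itself thrifty: a computer check confirms that every factor of $c(b_5)$ up to some sufficient length contains at most the $8$ distinct occurrences of $ABBA$, and one must then argue that no longer factor introduces a ninth occurrence. As in the twelve-squares proof, a direct test that $c(b_5)$ avoids the relevant long patterns may be out of reach, so I expect to need a workaround: identify a short word $z$ that occurs in every factor of $c(b_5)$ of some fixed length, so that any long occurrence of $ABBA$ would force an occurrence of a fixed-length pattern (with the letters of $z$ pinned to constants) that can be checked directly with Rosenfeld's program.

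Next I would set up the structural recovery. Following the pattern of the other proofs, I would construct a finite set $S^{N}$ of length-$N$ words $v$ (for a suitably large $N$, likely around $100$ or $120$) such that $v\in S^{N}$ iff $pvs$ is thrifty for some $p,s$ with $|p|=|s|=N$, and check by computer that $S^{N}$ is exactly the set of length-$N$ factors of $c(b_5)$. This shows that any bi-infinite thrifty word $r_2$ has all its length-$N$ factors appearing in $c(b_5)$. The crucial combinatorial step is then to read off, from the structure of these factors, a unique desubstitution: I would exhibit a synchronizing marker (some short word playing the role that $g_4(\texttt{0})$ or the factor $\texttt{000}$ played earlier) guaranteeing that $r_2$ factors uniquely over the blocks $\{c(\texttt{0}),c(\texttt{1}),c(\texttt{3}),c(\texttt{4})\}$, so that $r_2=c(r_5)$ for some bi-infinite word $r_5$ over the four non-erased letters of $\Sigma_5$.

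The remaining task is to show that $r_5$ has the same factor set as $b_5$, i.e.\ that $r_5$ avoids $AA$ together with the list $\{\texttt{02},\texttt{03},\texttt{13},\dots,\texttt{4234}\}$ characterizing $b_5$ from the introduction. Here $c$ is essentially $c=g\circ M$ for some coding-like $M$ and a fixed auxiliary morphism $g$, mirroring the identity $h_{12}=g_{12}\circ M_2$; so I would factor $c$ this way, observe that $c$ is a code, and transfer the local avoidance constraints from $r_2$ back to $r_5$ block by block (squares of short period in $r_5$ would force occurrences of $ABBA$ in $r_2$ beyond the allotted $8$, and the forbidden length-two and short factors of $b_5$ would likewise produce extra occurrences). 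Once $r_5$ avoids $AA$ and the finite forbidden list, the characterization of $b_5$ gives that $r_5$ and $b_5$ share a factor set, and taking the $c$-image yields the claim.

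The main obstacle I anticipate is the verification that $c(b_5)$ contains only $8$ occurrences of $ABBA$ and that a bi-infinite thrifty word cannot sneak in a ninth long occurrence: because $ABBA$ has the isolated-free but long-range structure of a gapped repetition, bounding the period of a hypothetical extra occurrence (so as to reduce to a finite pattern-avoidance check) is delicate, and getting the right short marker $z$ and the right window length $N$ for the synchronization is where the real work lies. The transfer of avoidance constraints across the code $c$ and the final appeal to the $b_5$ characterization should then be routine.
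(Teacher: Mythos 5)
There is a genuine gap, and it sits exactly at the point you wave off as ``routine'': ruling out long squares in an arbitrary bi-infinite thrifty word. Your window check ($S^N$ equals the set of length-$N$ factors of $c(b_5)$) only controls factors of bounded length, so it excludes squares of period up to roughly $N/2$ but says nothing about a square of period $10^6$ hiding in a thrifty word. Your block-by-block transfer step explicitly addresses only ``squares of short period in $r_5$''; but to invoke the characterization of $b_5$ you need $r_5$ to avoid \emph{all} squares, and thriftiness is a hypothesis about occurrences of $ABBA$, not about squares, so there is no automatic implication. The paper's proof hinges on one combinatorial observation that your proposal is missing: if a binary word contains a square $uu$ with $|u|\ge 7$, write $u=xvy$ with $|x|=|y|=2$; the central factor $yx$ of $uu=xvyxvy$ has length $4$ and therefore contains a square $cc$, so writing $yx=pccs$ one finds the factor $svpccsvp$ inside $uu$, which is an occurrence of $ABBA$ with $|h(A)|=|svp|\ge|v|\ge 3$ and hence cannot be one of the eight short occurrences in $X_8$. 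This is what converts ``at most $8$ occurrences of $ABBA$'' into ``no squares of period $\ge 51$'' and closes the argument; without it your desubstitution cannot be completed.

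A secondary divergence: the paper does not desubstitute the thrifty word directly. It first establishes (via the technique of~\cite{BO15}) that the finite set $SQ_3\cup F$ characterizes $c(b_5)$, and then reduces thriftiness to avoidance of $SQ_3\cup F$ using the window check plus the square-to-$ABBA$ argument above. Your direct factorization over $\{c(\texttt{0}),c(\texttt{1}),c(\texttt{3}),c(\texttt{4})\}$ could in principle be made to work, but it re-proves the $SQ_3\cup F$ characterization by hand and still needs the missing lemma. Also, for the verification that $c(b_5)$ itself has only the eight occurrences in $X_8$, your ``pinned marker'' workaround is not what the paper does: the paper uses $SQ_3$-avoidance to force $1\le|h(B)|\le 2$ and then a case analysis on $h(B)$ using positions modulo $10$ and avoidance of the patterns $ABCCCCAB$, $ABAAAB$, $ABBBAB$, and $ABACCABA$; the direct computation you hope to substitute for this is reported in the paper as infeasible.
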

\begin{proof}
Let $X_8=\acc{\textnormal{\texttt{0000}}, \textnormal{\texttt{0110}}, \textnormal{\texttt{1001}}, \textnormal{\texttt{1111}},
\textnormal{\texttt{001100}}, \textnormal{\texttt{011110}}, \textnormal{\texttt{100001}}, \textnormal{\texttt{110011}}}$ and
$F=\{\texttt{00000},\texttt{01010},\texttt{01110},\texttt{0001101},\texttt{0100111},\texttt{01000010},\texttt{00111100},\texttt{11111},$\\$\texttt{10101},\texttt{10001},\texttt{1110010},\texttt{1011000},\texttt{10111101},\texttt{11000011}\}$.
It is easy to see that $c(b_5)$ avoids $F$ and we have checked that $c(b_5)$ avoids $SQ_3$.
Then we use the technique in~\cite{BO15} to show that every bi-infinite binary word
avoiding $SQ_3\cup F$ has the same factor set as $c(b_5)$. So $SQ_3\cup F$ characterizes $c(b_5)$.

Now, we show by contradiction that $c(b_5)$ does not contain an occurrence of $ABBA$ outside of $X_8$.
A computer check shows that $X_8$ contains every occurrence $h$ of $ABBA$ in $c(b_5)$ such that $|h(AB)|\le100$.
So we can assume that $|h(AB)|\ge101$.
Because of the square $BB$ and since $c(b_5)$ avoids $SQ_3$, we have $1\le|h(B)|\le2$, so that $|h(A)|\ge99$.
So it is sufficient to prove that $c(b_5)$ avoids $ABCDDABC$.
Unfortunately, a direct computation was not finished after several days and we have to use the following case analysis.
Notice that all the factors \texttt{0101} and \texttt{1010} start at position $1\pmod{10}$ in $c(b_5)$. 
Moreover, $h(ABBA)$ is a repetition of period $h(ABB)$.
This implies that $|h(ABB)|$ is a multiple of $10$.
\begin{itemize}
\item $h(B)\in\acc{\texttt{00},\texttt{11}}$: This is ruled out because $c(b_5)$ avoids the pattern $ABCCCCAB$.
\item $h(B)\in\acc{\texttt{01},\texttt{10}}$: If $c(b_5)$ contains $h(A)\texttt{0101}h(A)$, then the letter \texttt{0}
is a suffix of the left occurrence of $h(A)$ in position $0\pmod{10}$. Since $|h(ABB)|$ is a multiple of $10$,
then \texttt{0} is a suffix of $h(A)\texttt{0101}h(A)$ in position $0\pmod{10}$.
So $h(A)\texttt{0101}h(A)$ extends to $h(A)\texttt{0101}h(A)\texttt{0101}$, which is ruled out since $c(b_5)$ avoids $SQ_3$. 
The case $h(B)=\texttt{10}$ is ruled out similarly.
\item $h(B)\in\acc{\texttt{0},\texttt{1}}$: Suppose that $c(b_5)$ contains $h(A)\texttt{00}h(A)$.
If $h(A)=\texttt{0}x$, then $h(A)\texttt{00}h(A)=\texttt{0}x\texttt{000}x$ is an occurrence of $ABAAAB$.
However, $c(b_5)$ avoids $ABAAAB$.
Similarly, if $h(A)=x\texttt{0}$, then $h(A)\texttt{00}h(A)=x\texttt{000}x\texttt{0}$ is an occurrence of $ABBBAB$ and $c(b_5)$ avoids $ABBBAB$.
Therefore $h(A)=\texttt{1}x\texttt{1}$. So $h(A)\texttt{00}h(A)=\texttt{1}x\texttt{1001}x\texttt{1}$
is an occurrence of $ABACCABA$. However, $c(b_5)$ avoids $ABACCABA$.
The case $h(B)=\texttt{1}$ is ruled out similarly.
\end{itemize}
So $c(b_5)$ is thrifty and $X_8$ is exactly the set of occurrences of $ABBA$ in $c(b_5)$.

Finally, we show that every thrifty bi-infinite word $w_8$ has the same factor set as $c(b_5)$, that is, avoids $SQ_3\cup F$.
We construct the set $S_8^{100}$ such that
a word $v$ is in $S_8^{100}$ if and only if there exists a thrifty word $pvs$ where $|p|=|v|=|s|=100$.
We check that $S_8^{100}$ is exactly the set of factors of length 100 of $c(b_5)$.
This implies that $w_8$ avoids $F$ and squares $uu$ with $3\le|u|\le50$.
So there remains to show that $w_8$ avoids $SQ_{51}$.
Suppose for contradiction that $w_8$ contains a square $uu$ with $|u|\ge 7$. We write $u=xvy$, where $|x|=|y|=2$ and thus $|v|\ge3$.
The factor $yx$ of $uu=xvyxvy$ has length 4, so it contains a square $cc$ and we write 
$yx=pccs$, where $p$ is a possibly empty prefix of $y$ and $s$ is a possibly empty suffix of $x$.
Now, $uu=xvpccsvy$ contains the factor $svpccsvp$.
This is an occurrence of $ABBA$ such that $|h(A)|=|svp|\ge|v|\ge3$, thus it is not in $X_8$.
So $w_8$ avoids $SQ_7$, then $w_8$ avoids $SQ_3\cup F$ and finally $w_8$ has the same factors as $c(b_5)$.

\end{proof}

\section{The period-doubling sequence}\label{sec:pd}
Recall that the period-doubling word ${\bf pd}$ is the fixed point of the morphism $z=\texttt{01}/\texttt{00}$.
Recently, James Currie~\cite{Currie2023} has obtained a characterization of ${\bf pd}$.
Our last result is another characterization of ${\bf pd}$.
We first reprove Currie's result for completeness, and also to reuse parts of the proof in order
to obtain our characterization.
\begin{theorem}~\cite{Currie2023}
Every bi-infinite binary word avoiding $\acc{AAAA, AAABABAA,\texttt{11},\texttt{1001}}$ has the same factor set as ${\bf pd}$.
\end{theorem}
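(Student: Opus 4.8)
The plan is to run, in the spirit of \Cref{lem:b3}, a desubstitution argument tailored to the $2$-uniform morphism $z=\texttt{01}/\texttt{00}$ whose fixed point is ${\bf pd}$. First I would dispatch the easy direction: since the incidence matrix of $z$ has a positive power, $z$ is primitive, so ${\bf pd}$ is uniformly recurrent and its language is minimal; moreover a check with Rosenfeld's program confirms that ${\bf pd}$ avoids $AAAA$ and $AAABABAA$ (it obviously avoids \texttt{11} and \texttt{1001}). Because ${\bf pd}$ is minimal, it will be enough to prove that every factor of an arbitrary bi-infinite avoiding word $w$ is a factor of ${\bf pd}$: minimality then upgrades this single inclusion to equality of factor sets.

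Next I would read off the block structure of $w$. Avoiding $AAAA$ forbids \texttt{0000}, so $w$ has no infinite run of \texttt{0}'s; hence \texttt{1} occurs infinitely often in both directions and every maximal block of \texttt{0}'s is flanked by \texttt{1}'s. Avoiding \texttt{11} isolates the \texttt{1}'s and keeps each block of \texttt{0}'s of length $\ge1$, avoiding \texttt{1001} forbids length $2$, and avoiding \texttt{0000} forbids length $\ge4$; so every block of \texttt{0}'s has length $1$ or $3$. As both are odd, consecutive \texttt{1}'s lie at even distance and therefore share a parity; fixing coordinates so that the \texttt{1}'s occupy odd positions makes every even position a \texttt{0}, which is exactly the condition for $w$ to factor into the blocks $\texttt{01}=z(\texttt{0})$ and $\texttt{00}=z(\texttt{1})$. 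Thus $w=z(u)$ for a bi-infinite binary word $u$.

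The crux is the constraint lifting, namely that $u$ again avoids $\acc{AAAA,AAABABAA,\texttt{11},\texttt{1001}}$. This turns out to be short because $z$ is non-erasing: the $z$-image of a fourth power is a fourth power, and the $z$-image of an occurrence of $AAABABAA$ is again an occurrence of $AAABABAA$, so any violation of $AAAA$ or $AAABABAA$ in $u$ would already appear in $w=z(u)$. For the two literal factors I would use $z(\texttt{11})=\texttt{0000}$, an occurrence of $AAAA$, and $z(\texttt{1001})=\texttt{00010100}$, which is precisely the occurrence of $AAABABAA$ with $A\mapsto\texttt{0}$ and $B\mapsto\texttt{1}$; this coincidence is exactly why $AAABABAA$ must appear in the characterization. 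Hence $u$ satisfies all four constraints.

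Finally I would iterate, writing $w=z(u_1)=z^2(u_2)=\cdots=z^n(u_n)$ with every $u_n$ avoiding the four constraints. Given a factor $f$ of $w$ with $|f|=\ell$ and $n$ with $2^n\ge\ell$, the rigidity of the $2^n$-uniform morphism $z^n$ forces $f$ to sit inside $z^n(g)$ for a factor $g$ of $u_n$ of length at most $2$; since $u_n$ avoids \texttt{11}, such a $g$ is one of $\texttt{0},\texttt{1},\texttt{00},\texttt{01},\texttt{10}$, all factors of ${\bf pd}$, so $z^n(g)$ and hence $f$ is a factor of $z^n({\bf pd})={\bf pd}$. Every factor of $w$ is therefore a factor of ${\bf pd}$, and minimality closes the argument. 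The main obstacle I anticipate is not the lifting, which is immediate once one notices $z(\texttt{1001})=\texttt{00010100}$, but rather two pieces of bookkeeping: justifying the global parity alignment so that $w$ is genuinely a single image $z(u)$ rather than merely locally block-structured, and confirming rigorously (via the pattern-checking program) that ${\bf pd}$ itself avoids the nontrivial pattern $AAABABAA$, since the whole characterization rests on that fact.
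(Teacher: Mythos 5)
Your proof is correct and takes essentially the same route as the paper: both decompose $w$ into blocks $\texttt{01}=z(\texttt{0})$ and $\texttt{00}=z(\texttt{1})$ using the forbidden factors \texttt{11}, \texttt{1001}, \texttt{0000}, and lift all four constraints to the preimage via $z(\texttt{11})=\texttt{0000}$ (an occurrence of $AAAA$) and $z(\texttt{1001})=\texttt{00010100}$ (an occurrence of $AAABABAA$). The only difference is that you spell out the final iteration-plus-minimality step that the paper compresses into ``which finishes the proof.''
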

\begin{proof}
Let $w$ be a bi-infinite binary word avoiding $\acc{AAAA, AAABABAA,\texttt{11},\texttt{1001}}$.
Notice that $w$ avoids \texttt{0000} as it is an occurrence of $AAAA$.
Let $\texttt{10}^k\texttt{1}$ be a factor of $w$. Then the forbidden factors \texttt{11},\texttt{1001}, and
\texttt{0000} show respectively that $k\ne0$, $k\ne2$, and $k<4$.
Thus $k\in\acc{1,3}$, so that $w\in{}^\omega\hspace{-1,5mm}\acc{\texttt{01},\texttt{0001}}^\omega$.
Therefore, $w\in{}^\omega\hspace{-1,5mm}\acc{\texttt{01},\texttt{00}}^\omega$
and there exists a bi-infinite binary word $v$ such that $w=z(v)$.
Then $v$ must avoid $AAAA$ and $AAABABAA$. Moreover, $v$ avoids \texttt{11} because
$z(\texttt{11})=\texttt{0000}$ is an occurrence of $AAAA$ and $v$ avoids \texttt{1001} because
$z(\texttt{1001})=\texttt{00010100}$ is an occurrence of $AAABABAA$.
So $v$ avoids $\acc{AAAA, AAABABAA,\texttt{11},\texttt{1001}}$ too, which finishes the proof.
\end{proof}

\begin{theorem}
Every bi-infinite binary word avoiding $\acc{AA.ABAB.BB,\texttt{11}}$ has the same factor set as ${\bf pd}$.
\end{theorem}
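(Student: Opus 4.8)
The plan is to mirror the descent in Currie's proof just given, replacing his four constraints by the two constraints $\acc{AA.ABAB.BB,\texttt{11}}$ and letting the formula do the combinatorial work that the forbidden factors $\texttt{1001}$ and $AAABABAA$ did for him. It is convenient to reformulate the formula: writing $R$ for the set of nonempty words $u$ such that $uu$ is a factor of $w$, avoiding $AA.ABAB.BB$ says exactly that there are no $p,q\in R$ with $pq\in R$ as well. First I would record the easy consequences. The word $w$ avoids $\texttt{11}$ by hypothesis, and it avoids $AAAA$: a fourth power $u^4$ is an occurrence of $AA.ABAB.BB$ under $A\mapsto u$, $B\mapsto u$, since its fragments map to $uu$, $u^4$, $uu$, all factors of $u^4$. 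Hence $w$ avoids $\texttt{0000}$, and together with $\texttt{11}$ this forces the block decomposition $w\in{}^\omega\acc{\texttt{01},\texttt{001},\texttt{0001}}^\omega$, reading each maximal block $\texttt{0}^k\texttt{1}$ with $k\in\acc{1,2,3}$.

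The crux is to eliminate the block $\texttt{001}$ (equivalently the factor $\texttt{1001}$), the step Currie obtains for free. I would argue by contradiction: suppose $\texttt{001}$ occurs. Then $\texttt{00}$ is a factor, so $\texttt{0}\in R$; and in a bi-infinite word the block $\texttt{001}$ is flanked as $\texttt{01}\cdot\texttt{001}\cdot\texttt{0}=\texttt{010010}$, so $\texttt{010}\in R$ as well. Now a short case analysis on neighbouring blocks produces a forbidden configuration. If the block $\texttt{01}$ occurs at all, its predecessor ends in $\texttt{01}$, so $\texttt{0101}$ is a factor, giving $\texttt{01}\in R$; then $(p,q,pq)=(\texttt{01},\texttt{0},\texttt{010})$ is an occurrence of the formula, a contradiction. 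Hence $\texttt{01}$ does not occur and $w\in{}^\omega\acc{\texttt{001},\texttt{0001}}^\omega$. In this alphabet the block $\texttt{0001}$ is always followed by a block of length $\ge2$, so $\texttt{01000100}$ is a factor and $\texttt{0100}\in R$; then $(p,q,pq)=(\texttt{010},\texttt{0},\texttt{0100})$ is again an occurrence. Hence $\texttt{0001}$ does not occur and $w={}^\omega(\texttt{001})^\omega$, which contains the fourth power $(\texttt{001})^4$ — already excluded. Every branch reaches a contradiction, so $\texttt{001}$ never occurs. This case analysis, exhibiting an explicit occurrence in each branch, is the part I expect to require the most care.

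With $\texttt{001}$ gone, $w\in{}^\omega\acc{\texttt{01},\texttt{0001}}^\omega={}^\omega\acc{\texttt{01},\texttt{00}}^\omega$; since all blocks have even length the $2$-grouping is globally consistent, so there is a bi-infinite binary word $v$ with $w=z(v)$. Both constraints lift to $v$: if $v$ contained $\texttt{11}$ then $w$ would contain $z(\texttt{11})=\texttt{0000}$, an occurrence of $AAAA$; and an occurrence of $AA.ABAB.BB$ in $v$ with roots $p,q$ maps forward through $z$ to one in $w=z(v)$ with roots $z(p),z(q)$, because $z$ sends $pp$, $(pq)^2$, $qq$ to $(z(p))^2$, $(z(p)z(q))^2$, $(z(q))^2$, all factors of $w$. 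Thus $v$ avoids $\acc{AA.ABAB.BB,\texttt{11}}$, so the class of bi-infinite avoiders is closed under $z^{-1}$, and iterating gives $w=z^n(v_n)$ with each $v_n$ in the class.

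Finally I would conclude as Currie does. Using closure under $z^{-1}$, an induction on length shows every factor of $w$ is a factor of ${\bf pd}$: a factor of $w=z(v)$ of bounded length lies inside $z(s)$ for a strictly shorter factor $s$ of $v$, which is a factor of ${\bf pd}$ by the inductive hypothesis applied to $v$, whence $z(s)$ is a factor of $z({\bf pd})={\bf pd}$ (the base case being the length-$\le2$ factors $\texttt{0},\texttt{1},\texttt{00},\texttt{01},\texttt{10}$, all occurring in ${\bf pd}$). Conversely, every factor of ${\bf pd}$ lies in some $z^n(\texttt{0})$, and $z^n(\texttt{0})$ is a factor of $w=z^n(v_n)$ since $v_n$ contains the letter $\texttt{0}$ (it avoids $\texttt{11}$, so it is not ${}^\omega\texttt{1}^\omega$). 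Hence $w$ and ${\bf pd}$ have the same factor set. A direct check that ${\bf pd}$ itself avoids $\acc{AA.ABAB.BB,\texttt{11}}$ confirms that the statement is not vacuous.
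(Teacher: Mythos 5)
Your proof is correct, and it follows the same overall skeleton as the paper's: show that $w$ avoids \texttt{11}, \texttt{1001} and \texttt{0000}, deduce $w=z(v)$, check that the avoidance class is closed under this desubstitution, and conclude by iterating. The genuine difference is in the crux, the exclusion of \texttt{1001}. The paper obtains it from a computer backtracking check (no infinite binary word avoids $\{AA.ABAB.BB,\texttt{11},\texttt{1010}\}$, hence $w$ contains \texttt{1010}) combined with the single occurrence $A\mapsto\texttt{0}$, $B\mapsto\texttt{10}$ assembled from \texttt{1010} and \texttt{010010}. You instead run a hand case analysis on the block decomposition ${}^\omega\{\texttt{01},\texttt{001},\texttt{0001}\}^\omega$, producing in each branch an explicit occurrence: $A\mapsto\texttt{01}$, $B\mapsto\texttt{0}$ from the factors \texttt{0101}, \texttt{010010}, \texttt{00}; or $A\mapsto\texttt{010}$, $B\mapsto\texttt{0}$ from \texttt{010010}, \texttt{01000100}, \texttt{00}; or a fourth power of \texttt{001}. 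I verified all three branches and each witness is a valid occurrence of $AA.ABAB.BB$, so the case analysis is complete. Your route buys a fully computer-free and self-contained proof of this step, at the cost of some length; the paper's version is shorter and reuses the backtracking machinery it already relies on throughout. You also spell out the closure of the class under $z$-preimages and the final two-sided induction on factor length that the paper compresses into ``which finishes the proof''; that added detail is consistent with the intended argument and fills a real gap in exposition rather than duplicating it.
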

\begin{proof}
Let $w$ be a bi-infinite binary word avoiding $\acc{AA.ABAB.BB,\texttt{11}}$.
Backtracking shows that no infinite binary word avoids $\acc{AA.ABAB.BB,\texttt{11},\texttt{1010}}$.
Thus $w$ contains \texttt{1010}.
Suppose for contradiction that $w$ contains \texttt{1001}.
Since $w$ avoids \texttt{11}, then $w$ contains \texttt{010010}.
Now $w$ contains \texttt{1010} and \texttt{010010}, so that $w$ contains the occurrence $A\mapsto\texttt{0}$;
$B\mapsto\texttt{10}$ of $AA.ABAB.BB$.
This contradiction means that $w$ avoids \texttt{1001}.
Notice that $w$ also avoids \texttt{0000} as it is an occurrence of $AA.ABAB.BB$.
As in the previous proof, $w$ avoids \texttt{11},\texttt{1001}, and \texttt{0000},
so that there exists a bi-infinite binary word $v$ such that $w=z(v)$.
Then $v$ must avoid $AA.ABAB.BB$. Moreover, $v$ avoids \texttt{11} because
$z(\texttt{11})=\texttt{0000}$ is avoided by $w$.
So $v$ avoids  $\acc{AA.ABAB.BB,\texttt{11}}$ too, which finishes the proof.
\end{proof}


\begin{thebibliography}{99}
\bibitem{Badkobeh-TCS} G.~Badkobeh.
Fewest repetitions vs maximal-exponent powers in infinite binary words,
\emph{Theoretical Computer Science} {\bf 412} (2011), 6625--6633.

\bibitem{BC-IPL} G.~Badkobeh and M.~Crochemore.
Infinite binary words containing repetitions of odd period.
\emph{Information Processing Letters} {\bf 115(5)} (2015),  543--547.

\bibitem{badkobeh2022avoiding} G.~Badkobeh, T.~Harju, P.~Ochem, and M.~Rosenfeld.
Avoiding square-free words on free groups.
\emph{Theoretical Computer Science} {\bf 922} (2022),  206--217.

\bibitem{BO15} G.~Badkobeh and P.~Ochem.
Characterization of some binary words with few squares.
\emph{Theoretical Computer Science} {\bf 588} (2015), 73--80.

\bibitem{BCMORS2023} A.~Baranwal, J.~Currie, L.~Mol, P.~Ochem, N.~Rampersad, and J.~Shallit.
Antisquares and critical exponents.
\emph{Discrete Mathematics and Theoretical Computer Science} {\bf 25(2)} (2023), \#11.

\bibitem{Cassaigne1994} J.~Cassaigne.
\emph{{Motifs \'evitables et r\'egularit\'e dans les mots.}}
PhD thesis, {Universit\'e Paris VI}, 1994.

\bibitem{cassaignealgo} J.~Cassaigne.
\emph{An Algorithm to Test if a Given Circular HDOL-Language Avoids a Pattern}.
\emph{IFIP Congress} (1994), 459--464,.

\bibitem{Currie91} J.~Currie.
Which graphs allow infinite nonrepetitive walks?
\emph{Discrete Mathematics} {\bf 87(3)} (1991), 249--260.

\bibitem{Currie2023} J.~Currie.
The analog of overlap-freeness for the period-doubling sequence.
\emph{J. Integer Sequences} (2001), Article 01.2.7.

\bibitem{CORS2022} J.~Currie, P.~Ochem, N.~Rampersad, and J.~Shallit.
Properties of a ternary infinite word.
\emph{RAIRO-Theor. Inf. Appl.} {\bf 57} (2023), \#1.

\bibitem{CDOORS2023} J.~Currie, L.~Dvořaková, P.~Ochem, D.~Opočenská, N.~Rampersad, and J.~Shallit.
Complement avoidance in binary words.
\arxiv{2209.09598}

\bibitem{Dejean:1972} F.~Dejean.
Sur un th\'eor\`eme de Thue.
\emph{J. Combin. Theory. Ser. A} {\bf 13} (1972), 90--99.

\bibitem{DO2023} A.~Domenech and P.~Ochem.
Doubled patterns with reversal and square-free doubled patterns.
\emph{Electron. J. Combin.}, {\bf 30(1)} (2023), \#P1.50.

\bibitem{DOO2023} L.~Dvořaková, P.~Ochem, and D.~Opočenská.
Critical exponent of binary words with few distinct palindromes.
\emph{Electron. J. Combin.}, {\bf 31(2)} (2024), \#P2.29.

\bibitem{GS2021} D.~Gabric and J.~Shallit.
The simplest binary word with only three squares.
\emph{RAIRO-Theor. Inf. Appl.} {\bf 55} (2021), \#3.

\bibitem{harju2011} T.~Harju.
Square-free walks on labelled graphs.
\arxiv{1106.4106}

\bibitem{Ochem2004} P.~Ochem.
A generator of morphisms for infinite words.
\emph{RAIRO - Theoret. Informatics Appl.}, 40:427--441, 2006.

\bibitem{aabbc} P.~Ochem.
Binary words avoiding the pattern AABBCABBA.
\emph{RAIRO - Theoret. Informatics Appl.} {\bf 44(1)} (2010), 151--158.

\bibitem{O16} P.~Ochem.
Doubled patterns are 3-avoidable.
\emph{Electron. J. Combin.}, {\bf 23(1)} (2016), \#P1.19.

\bibitem{OchemRosenfeld2016} P.~Ochem and M.~Rosenfeld.
Avoidability of formulas with two variables.
\emph{Electron. J. Combin.}, 24(4):\#P4.30, 2017.

\bibitem{OchemRosenfeld2017} P.~Ochem and M.~Rosenfeld.
On some interesting ternary formulas.
\emph{Electron. J. Combin.}, 26(1):\#P1.12, 2019.

\bibitem{OchemRosenfeld2021} P.~Ochem and M.~Rosenfeld.
Avoidability of palindrome patterns.
\emph{Electron. J. Combin.} {\bf 28(1)} (2021), \#P1.4.

\bibitem{Shur:2012} A.~Shur.
Growth properties of power-free languages.
\emph{Computer Science Review} {\bf 6} (2012), 187--208.

\bibitem{Thue06} A.~Thue.
\"{U}ber unendliche {Z}eichenreihen.
\emph{Norske Vid. Selsk. Skr. I. Mat. Nat. Kl. Christiania}, {\bf 7} (1906), 1--22.

\bibitem{Thue:1912} A.~Thue.
\"Uber die gegenseitige Lage gleicher Teile gewisser Zeichenreihen.
\emph{Norske vid. Selsk. Skr. Mat. Nat. Kl.} {\bf 1} (1912), 1--67.
Reprinted in \emph{Selected Mathematical Papers of Axel Thue}, T.
Nagell, editor, Universitetsforlaget, Oslo, (1977), 413--478.

\end{thebibliography}
\end{document}